\newcommand{\C}{\mathbb{C}}
\newcommand{\Z}{\mathbb{Z}}
\newcommand{\R}{\mathbb{R}}
\newcommand{\RP}{\mathbb{R}P}
\newcommand{\CP}{\mathbb{C}P}
\newcommand{\T}{\mathbb{T}}
\newcommand{\id}{\mathrm{Id}}
\newcommand{\del}{\partial}
\newcommand{\delbar}{\overline{\partial}}
\newcommand{\OP}{\operatorname}
\renewcommand{\Im}[1]{\mathfrak{Im}\,#1}
\renewcommand{\Im}{\mathfrak{Im}}
\newsavebox{\textvisiblespacebox}
\savebox{\textvisiblespacebox}{\texttt{aa}}
\newcommand\vartextvisiblespace[1][\wd\textvisiblespacebox]{%
 \makebox[#1]{\kern.1em\rule{.4pt}{.3ex}%
 \hrulefill%
 \rule{.4pt}{.3ex}\kern.1em}%
}
\numberwithin{equation}{section}
\newtheorem{thm}{Theorem}[section]
\newtheorem{lma}[thm]{Lemma}
\newtheorem{prp}[thm]{Proposition}
\newtheoremstyle{TheoremNum}
  {\topsep}{\topsep}       
  {\itshape}           
  {}               
  {\bfseries}           
  {.}               
  { }               
  {\thmname{#1}\thmnote{ \bfseries #3}}
\theoremstyle{TheoremNum}
\theoremstyle{definition}
\newtheorem{dfn}[thm]{Definition}
\newtheorem{quest}[thm]{Question}
\newtheorem{ex}[thm]{Example}
\theoremstyle{remark}
\newtheorem{rmk}[thm]{Remark}
\theoremstyle:=definition,remark,plain,TheoremNum\do{%
\expandafter\g@addto@macro\csname th@\theoremstyle\endcsname{%
\addtolength\thm@preskip\parskip 
}%
} 
\title{Construction of symplectic flexible links}
\author{Johan Björklund}
\address{
Avdelningen för elektroteknik, matematik och naturvetenskap\\
Högskolan i Gävle\\
SE-801 76 GÄVLE\\
SWEDEN}
\email{johan.bjorklund@hig.se}
\author{Georgios Dimitroglou Rizell}
\address{Department of Mathematics\\
Uppsala University\\
Box 480\\
SE-751 06 UPPSALA\\
SWEDEN}
\email{georgios.dimitroglou@math.uu.se}
\thanks{The author is supported by the Knut and Alice Wallenberg Foundation through the grants KAW 2021.0191 and KAW 2023.0294.}
\begin{document}

\begin{abstract}
We show that any smooth one-dimensional link in the real projective three-plane is the fixed-point locus of a smooth symplectic surface in the complex projective three-plane which is invariant under complex conjugation. The degree of the surface can be taken to be either one or two, depending on the homology class of the link. In other words, there are no obstructions to finding a symplectic representative of a flexible link beyond the classical topology.
\end{abstract}
\keywords{Real algebraic knots and links, flexible knots and links, anti-symplectic involution, symplectic submanifolds, transverse knots}

\maketitle

\section{Introduction and results}

A smooth link $L \subset \RP^3 \subset \CP^3$ is called \textbf{real algebraic} if it is the real locus of a closed real smooth algebraic curve $\Sigma \subset \CP^3$, i.e.~an embedded algebraic surface that is invariant under complex conjugation $I \colon \CP^3 \to \CP^3$, i.e.~ the real part of an embedded algebraic curve that is invariant under complex conjugation. For any submanifold $\Sigma \subset \CP^n$ which is invariant under complex conjugation $I$, we write $\Sigma_\R=\Sigma \cap \RP^3=L$ for its fixed-point locus. Recall that $I$ is an anti-symplectic involution for the standard Fubini--Study symplectic form $\omega_{FS}$, that is $I^*\omega_{FS}=-\omega_{FS}$, and that the fixed-point locus of $I$ is precisely $\RP^3$. An $I$-invariant surface $\Sigma \subset \CP^3$ is said to be of \textbf{Type-I} if $\Sigma \setminus \Sigma_\R$ is disconnected, and is otherwise said to be of \textbf{Type-II}.

 The first author classified rational real algebraic knots up to degree five in \cite{Johan:Real}. The cases of degree less than six, and genus at most one, were later classified by Mikhalkin--Orevkov in \cite{MikhalkinOrevkov}. Also, see work by Akbulut \cite{Akbulut} for the existence of complexifications of knots (that typically live in high degrees). Real algebraic knots are naturally very rigid objects. For instance, it is easy to show that the isotopy class of $\Sigma_\R$ is unique when $\Sigma$ is of degree one or two, even up to isotopy through algebraic curves. On the contrary, dropping the condition on $\Sigma$ being algebraic and only requiring the surface to be orientable, $I$-invariant, and compatible with the complex orientation, one obtains a much more flexible type of object that was studied by the first author in \cite{Johan:Flexible}. Such a link in $\RP^3$ is called a \textbf{flexible link} and, as the name suggests, flexible links are determined by classical topological invariants and the knot type up to flexible isotopy; see the aforementioned work.

Compared to the real algebraic case, there is a big difference in the flexible case; namely, in this case the real locus does not determine the $I$-invariant surface, not even its topology. For this reason we also include the choice of the $I$-invariant surface itself as a part of the definition of a flexible link. In this note we are concerned with the following more restrictive subclass:
\begin{dfn}
A \textbf{flexible-symplectic link} is an embedded symplectic $I$-invariant surface $\Sigma \subset (\CP^3,\omega_{FS})$ of real dimension $\dim\Sigma=2$. We denote its fixed-point set by $\R \Sigma=\Sigma \cap \RP^3$, which is a smooth link.
\end{dfn}
The reason why we again call this object ``flexible'' is that the existence of an extension of a link $L \subset \RP^3$ to an $I$-invariant symplectic surface only is governed by classical topology. Establishing this fact is the main goal of our paper. Before stating the result, we point out the obvious fact that any closed symplectic surface $\Sigma \subset \CP^3$ must have a positive degree $d > 0$, i.e.~must be homologous to $[\Sigma]=d[\CP^1] \in \Z[\CP^1]=H_2(\CP^3)$ with $d > 0$.
\begin{thm}
\label{thm:main}
 Any smooth link $L \subset \RP^3$ can be realised as the real-part $L=\Sigma_\R$ of a connected flexible-symplectic link of degree $d \in \{1,2\}$ of the same parity as $[L] \in H_1(\RP^3)=\Z_2$. Furthermore, the surface $\Sigma \setminus L$ can be either of Type I or II; in the first case the genus can assume any value $g= \OP{rk} H_0(L)-1+2k$ with $k\ge 0$, while in the second case it can assume any value $g= \OP{rk} H_0(L)-1+k$ with $k \ge 1$. 
\end{thm}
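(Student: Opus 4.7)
The plan is to split the argument into two steps: a topological construction of a smooth $I$-invariant embedded surface $\Sigma \subset \CP^3$ with $\Sigma_\R = L$ and the prescribed degree, Type and genus; followed by a symplectic adjustment that preserves $I$-invariance and the real locus. The crucial local observation is that $I$-invariance already forces symplecticity near $L$: at each $p \in L$ the differential of $I$ splits $T_p\CP^3 = T_p\RP^3 \oplus JT_p\RP^3$ into $(\pm 1)$-eigenspaces, and the unique complex line $\ell \subset T_p\CP^3$ with $\ell \cap T_p\RP^3 = T_pL$ is $T_pL \oplus JT_pL$. Being a complex line, $\ell$ is automatically $\omega_{FS}$-positive, so every smooth $I$-invariant $\Sigma$ realising $L$ is symplectic in a tubular neighbourhood of $L$.

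For the global construction I would begin with an explicit low-degree $I$-invariant symplectic model $\Sigma_0 \subset \CP^3$ of the correct parity: a real projective line $\CP^1$ (degree one, real part a standard $\RP^1$) in the case $[L]\ne 0$, and a smooth real quadric (degree two, real part either empty or a standard null-homologous unknot) in the case $[L]=0$. I would then reach the general link $L$ from the standard real part of $\Sigma_0$ through a finite sequence of $I$-equivariant symplectic band-surgeries, each carried out in an $I$-symmetric Darboux-type chart modelled on $(\C^3,\omega_{\mathrm{std}})$ with $I$ acting as complex conjugation: a band is inserted as a small $I$-symmetric symplectic annular piece whose real locus realises the corresponding band-move on the link. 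Since band-surgeries preserve the degree and the class $[L]\in H_1(\RP^3;\Z_2)$ and generate all links within a given $\Z_2$-homology class, the combinatorial content is provided by the first author's earlier classification of flexible links.

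The remaining steps adjust the genus and arrange the Type. Type I with any even increment to the base genus $n-1$ is obtained by attaching a pair of mutually $I$-conjugate symplectic $1$-handles in $\CP^3\setminus\RP^3$, via the standard $I$-equivariant symplectic-handle model. Type II (together with the odd increments $k$ to the base genus) is realised by an $I$-symmetric twisted-tube move: in a standard $I$-chart one excises an $I$-conjugate pair of small symplectic disks $(D, I(D))$ from $\Sigma$ and replaces it by an $I$-invariant symplectic cylinder connecting $\partial D$ to $\partial I(D)$ through the $\RP^3$-region with a half-twist, chosen so that the new real locus is isotopic to $L$.

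The principal technical obstacle is ensuring that all of these modifications can be carried out \emph{simultaneously} so as to remain embedded, $I$-equivariant and everywhere-$\omega_{FS}$-positive. I would address this by fixing an $I$-equivariant $\omega_{FS}$-tamed almost-complex structure $J$ on $\CP^3$, whose space is non-empty and contractible, and performing every local move as a $J$-holomorphic model; the $J$-holomorphic pieces then glue, via a Moser-type interpolation on the collars, to an embedded $I$-invariant closed surface which is $J$-holomorphic and hence $\omega_{FS}$-symplectic. The hardest case is the twisted-tube move, as it requires both the topological change of Type and a symplectic matching across the $\RP^3$-crossing region without introducing new real locus components.
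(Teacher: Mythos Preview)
Your local observation is incorrect, and this undermines the two–step plan. An $I$–invariant surface meeting $\RP^3$ cleanly in $L$ has, at each $p\in L$, a tangent plane of the form $T_pL\oplus V$ with $V$ a real line in the $(-1)$–eigenspace $J\,T_p\RP^3$ of $dI_p$; but $V$ is \emph{not} forced to be $J\,T_pL$. If $T_pL=\R e_1$ and $V=\R\,Je_2$ with $e_1\perp e_2$ in $T_p\RP^3$, then $\omega_{FS}(e_1,Je_2)=g(e_1,e_2)=0$ and $T_p\Sigma$ is Lagrangian, not symplectic. So $I$–invariance alone does not make $\Sigma$ symplectic near $L$, and a purely topological Step~1 followed by a ``symplectic adjustment away from $L$'' does not get off the ground.

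The paper avoids this by never treating the surface as a topological object to be corrected. It identifies a neighbourhood of $\RP^3$ with a star–shaped piece of $(T^*\RP^3,d\theta)$ equivariantly (Theorem~\ref{thm:identification}), and then builds $\Sigma$ \emph{outward} from $L$ by concatenating symplectic pieces with conical ends: first the cotangent extension of $L$ (Example~\ref{ex:cotangent}(2)), which is genuinely conical and symplectic from the outset, then $I_0$–invariant symplectic cobordisms in the symplectisation $\R\times U^*\RP^3$, and finally a fixed ``hat'' $\Sigma_i^{\mathrm{hat}}$ cut from a real algebraic line or conic. The flexibility that lets one match an arbitrary $L$ to one of only two standard hats is not band surgery on the real link but the $h$–principle for transverse knots in the five–dimensional contact manifold $U^*\RP^3$ (Proposition~\ref{prp:hprinciple}): any two $I$–invariant transverse links in the same homotopy class are transversely isotopic, so the trace cobordism of Lemma~\ref{lma:tracecob} bridges the gap. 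This is the key idea your proposal is missing; it is what replaces all of your local $J$–holomorphic models and Moser interpolations by a single clean statement.

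Two further specific issues. First, your $J$–holomorphic gluing is not a valid mechanism: for a fixed tame $J$, $J$–curves are rigid and cannot be prescribed in overlapping charts and then ``interpolated on collars''; at best you would obtain something symplectic, and then you are back to controlling symplecticity directly, which is exactly what the concatenation–of–cobordisms framework does. Second, your Type~II ``twisted tube through the $\RP^3$–region'' is problematic: an $I$–invariant tube that actually meets $\RP^3$ acquires new fixed points and changes $\Sigma_\R$, while one that avoids $\RP^3$ must be constructed inside $T^*\RP^3\setminus 0_{\RP^3}\cong \R\times U^*\RP^3$. The paper produces the latter explicitly as an $I_0$–invariant connected Lagrangian cylinder in $\R\times J^1S^1$ (Section~\ref{sec:fillingII}) and then pushes it off to a symplectic surface via Proposition~\ref{prp:pushoff}; this push–off of Lagrangian cobordisms is in fact how \emph{all} the symplectic cobordism pieces (connected sums, genus increments, Type~II connectors) are manufactured.
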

We plan to study the classification question in future work, i.e.~the set of flexible-symplectic links up to the relation of isotopy through links of the same type.
\begin{rmk}
Any $I$-invariant surface $\Sigma\subset \CP^3$ of real dimension $\dim\Sigma=2$ that intersects $\RP^3$ cleanly must have a degree of the same parity as that of the corresponding class in $[\Sigma \cap \RP^3] \in H_1(\RP^3)=\Z_2$. Namely, the divisor $\CP^2_\infty \subset \CP^3$ is fixed set-wise by $I$ with fixed-point locus $\RP^2_\infty \subset \RP^3$, and the degrees of a curve in $\CP^3$ and $\RP^3$ are given by the intersection number with $\CP^2_\infty$ and $\RP^2_\infty$, respectively.
\end{rmk}

\section{Background on symplectic and contact geometry}

\subsection{Symplectic manifolds}

A \textbf{symplectic manifold} is a pair $(X^{2n},\omega)$ consisting of a smooth even-dimensional manifold $X^{2n}$ together with a closed non-degenerate two-form $\omega \in \Omega^2(X)$, i.e.~a \textbf{symplectic two-form}.
\begin{ex}Here we are mainly interested in the complex projective space $(\CP^n,\omega_{FS})$ equipped with the Fubini--Study K\"{a}hler form $\omega_{FS}\in \Omega^{(1,1)}(\CP^n)$. This symplectic manifold has an integrable complex structure $J \in End(T\CP^n)$, $J^2=-\id$, and the the symplectic form is a K\"{a}hler form which in local affine coordinates can be written as
$$\omega_{FS}=\frac{i}{2}\del\delbar\log{\left(1+|\mathbf{z}|^2\right)}=-\frac{1}{4}dd^c\log{\left(1+|\mathbf{z}|^2\right)}, \:\:d^c f \coloneqq d f\circ J,$$
i.e.~it has a locally defined K\"{a}hler potential $\log{\left(1+|\mathbf{z}|^2\right)}$ in each standard affine coordinate patch. Note that the area of a line $\CP^1 \subset \CP^n$ is equal to $\int_{\CP^1} \omega_{FS}=\pi$ with our conventions.
\end{ex}
A \textbf{symplectomorphism} is a diffeomorphism between symplectic manifolds that preserves the symplectic forms. A smooth isotopy $\phi^t$ through symplectomorphisms is a \textbf{Hamiltonian isotopy} if $\iota_{\dot{\phi}^t}\omega=-dH_t$ for some smooth function $H_t \colon X \to \R$ which is uniquely defined up to the addition of a locally constant function. The time-one map of a Hamiltonian isotopy is called a \textbf{Hamiltonian diffeomorphism}.
\begin{ex}The group of K\"{a}hler isometries of $(\CP^n,\omega_{FS})$ is equal to the projective unitary group $PU(n+1)$, which acts linearly on the homogeneous coordinates $\C^{n+1}\setminus \{0\}$ by elements in $U(n+1)$. Since these groups are connected and $\CP^n$ is simply connected, the elements consist of Hamiltonian diffeomorphisms.
\end{ex}
 
A submanifold $\iota \colon Y \hookrightarrow (X,\omega)$ is said to be \textbf{symplectic} if the two-form $\iota^*\omega$ is symplectic. A standard fact about isotopies of symplectic submanifolds that will be crucial for us is the ``Hamiltonian Thom Isotopy Lemma'', which is a folklore result that was given a thorough proof by Antonini--Cavalletti--Lerario in \cite{HamiltonianThom}:
\begin{thm}[The Hamiltonian Thom Isotopy Lemma; e.g.~\cite{HamiltonianThom}]
\label{thm:HamiltonianThom}
 A smooth isotopy $\iota_t \colon (Y,\iota_t^*\omega) \hookrightarrow (X,\omega)$ of symplectic embeddings which is fixed outside of a compact subset of the domain is generated by a compactly supported Hamiltonian isotopy, in the sense that $\iota_t(Y)=\phi^t\circ \iota_0(Y)$, where $\phi^t$ is a compactly supported Hamiltonian isotopy of $(X,\omega)$.
\end{thm}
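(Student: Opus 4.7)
The strategy is to construct a compactly supported time-dependent Hamiltonian $H_t\colon X\to \R$ whose Hamiltonian vector field $X_{H_t}$, restricted to $\iota_t(Y)$, realises the normal component of the geometric velocity of the isotopy $\iota_t$; the resulting Hamiltonian flow $\phi^t$ then automatically satisfies $\phi^t(\iota_0(Y))=\iota_t(Y)$ as subsets. Concretely, I would differentiate the isotopy to obtain a time-dependent vector field $v_t$ along $\iota_t(Y)$ via $v_t(\iota_t(y))=\tfrac{d}{ds}\big|_{s=t}\iota_s(y)$. Only the normal component $v_t^\perp\in\Gamma(N\iota_t(Y))$ is relevant for matching the images, as tangential components correspond to reparametrisations of $Y$; by hypothesis $v_t^\perp$ is supported in a compact subset of $\iota_t(Y)$, uniformly in $t$.

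The local step relies on a parametric version of the Darboux--Weinstein neighbourhood theorem: around any $(t_0,x_0)$ with $x_0\in \iota_{t_0}(Y)$ there exist Darboux coordinates $(q_1,p_1,\ldots,q_n,p_n)$ on a neighbourhood of $x_0$, depending smoothly on $t$ for $t$ close to $t_0$, in which $\omega=\sum_i dq_i\wedge dp_i$ and $\iota_t(Y)$ is cut out by $\{q_j=p_j=0 \text{ for } j>k\}$, where $2k=\dim Y$. In such a chart, write $v_t^\perp=\sum_{j>k}(a_j\,\partial_{q_j}+b_j\,\partial_{p_j})$ with $a_j,b_j$ smooth functions of $(q_1,\ldots,p_k,t)$ alone, and define the linear local Hamiltonian
$$H_{\mathrm{loc}}(q,p,t)=\sum_{j>k}\bigl(a_j\,p_j-b_j\,q_j\bigr).$$
Then $H_{\mathrm{loc}}$ vanishes along $\iota_t(Y)$, and a direct computation using the standard formula for $X_H$ confirms that $X_{H_{\mathrm{loc}}}|_{\iota_t(Y)}=v_t^\perp$.

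For globalisation, cover the compact set $\{(\iota_t(y),t):y\in K,\, t\in[0,1]\}\subset X\times[0,1]$ -- where $K\subset Y$ is a compact set outside of which $\iota_t$ is independent of $t$ -- by finitely many such charts $U_i$ with associated local Hamiltonians $H_{\mathrm{loc},i}$, and fix a smooth partition of unity $\{\rho_i\}$ on $X\times[0,1]$ subordinate to this cover and supported near the compact set. Define
$$H_t=\sum_i \rho_i(\cdot,t)\,H_{\mathrm{loc},i}(\cdot,t),$$
which is compactly supported in $X$. Since each $H_{\mathrm{loc},i}$ vanishes on $\iota_t(Y)$, the Leibniz identity $X_{fg}=fX_g+gX_f$ collapses to $X_{H_t}|_{\iota_t(Y)}=\sum_i \rho_i\,X_{H_{\mathrm{loc},i}}|_{\iota_t(Y)}=v_t^\perp$ by convexity, so the Hamiltonian flow of $H_t$ is the desired $\phi^t$.

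The most delicate step is the parametric Darboux--Weinstein normal form invoked in the second paragraph, which demands smoothly varying symplectic charts along the family $\iota_t(Y)$; this is folklore but needs careful execution, and is presumably where the work in \cite{HamiltonianThom} goes. Once it is available, the construction proceeds cleanly precisely because the local Hamiltonians are arranged to vanish on $\iota_t(Y)$, eliminating the troublesome cross terms $H_{\mathrm{loc},i}X_{\rho_i}$ in the Leibniz expansion along the submanifold.
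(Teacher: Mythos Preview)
The paper does not give its own proof of this theorem; it is stated as a folklore result with a citation to Antonini--Cavalletti--Lerario \cite{HamiltonianThom}, followed only by a remark explaining why the compact case treated there immediately adapts to compactly supported isotopies of properly embedded symplectic submanifolds. There is therefore no proof in the paper to compare your proposal against.

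Your sketch is a correct outline of the standard argument and is in the spirit of what the cited reference does. The essential mechanism---arranging the local Hamiltonians to vanish along $\iota_t(Y)$ so that the Leibniz cross terms $H_{\mathrm{loc},i}\,X_{\rho_i}$ die on the submanifold, reducing the patched Hamiltonian vector field to a convex combination of the local ones---is exactly right, and you have correctly identified the parametric symplectic normal form as the place where genuine work is required. One small point worth making explicit in your first paragraph: the conclusion $\phi^t(\iota_0(Y))=\iota_t(Y)$ follows from $X_{H_t}|_{\iota_t(Y)}=v_t^\perp$ because the difference $v_t-X_{H_t}\circ\iota_t$ is then tangent to $\iota_t(Y)$, so $(\phi^t)^{-1}\circ\iota_t$ is an isotopy of embeddings with image constantly equal to $\iota_0(Y)$; you assert this but do not quite say why.
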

\begin{rmk}
 
 \begin{enumerate}
  \item The proof in \cite{HamiltonianThom} is for isotopies of compact symplectic submanifolds of compact symplectic manifolds. However, the proof immediately works also for compactly supported isotopies of properly embedded symplectic manifolds. The latter version of the result translates to the version that we stated above.
  \item Since we only make requirements about the images $\phi^t\circ \iota_0(Y)=\iota_t(Y)$, and not the parametrised maps, there is no need to assume that $Y$ is simply connected. Note that the two parametrised symplectic embeddings $\phi^t\circ \iota_0$ and $\iota_t$ of the domain $(Y,\omega_Y)$ thus very well may differ by a non-Hamiltonian symplectic isotopy of $Y$.
 \end{enumerate}
\end{rmk}

 Recall that a submanifold $L \subset (X^{2n},\omega)$ is said to be \textbf{isotropic} if $\omega$ pulls back to zero on $L$. This implies that $L$ is at most half-dimensional. If the dimension, moreover, is equal to half i.e.~$\dim L=n$, then we say that $L$ is \textbf{Lagrangian}.

\begin{ex}
The one-form $d^c\log{(1+|\mathbf{z}|^2)}$ vanishes along the real part $\RP^n \subset \CP^n$ in each standard affine coordinate patch. In other words, $\RP^n \subset (\CP^n,\omega_{FS})$ is a Lagrangian submanifold. 
\end{ex}

 When $I$ is an \textbf{anti-symplectic involution} of $(X,\omega)$, i.e.~$I^*\omega=-\omega$, it follows that the fixed-point set of $I$ is a Lagrangian submanifold.

\begin{ex}
The pull-back of the Fubini--Study form under complex conjugation is equal to
$$ I^* \frac{i}{2}\del\delbar\log{\left(1+|\mathbf{z}|^2\right)}=\frac{i}{2}\delbar\del\log{\left(1+|\mathbf{z}|^2\right)}=-\omega_{FS},$$
since the operators $\del$ and $\delbar$ anti-commute. In other words, $I$ is an anti-symplectic involution. Since $\RP^n$ is the set of fixed-points of this involution, this also implies that $\RP^n$ is Lagrangian.
\end{ex}

\subsection{Contact manifolds}

Contact geometry can be expressed in terms of conical symplectic geometry. More precisely, a \textbf{contact manifold} is a pair $(Y^{2n+1},\xi)$ that consists of an odd-dimensional manifold together with a maximally non-degenerate field of tangent hyperplanes $\xi \subset TY$. We will be interested in the case when $\xi=\ker \alpha$ for some auxiliary contact one-form $\alpha \in \Omega^1(Y)$. The contact condition can be expressed as the condition that the conical manifold
$$ (\R_\tau \times Y,d(e^\tau\alpha))$$
is symplectic.

A one-dimensional knot or link in a contact manifold $T \subset (Y,\xi)$ is said to be \textbf{transverse} if it is everywhere transverse to the contact hyperplane distribution $\xi \subset TM$. This condition can be equivalently formulated as the condition that
$$ \R \times T \subset (\R_\tau \times Y,d(e^\tau\alpha)) $$
is a symplectic cone. An oriented knot is said to be \textbf{positively transverse} if $\alpha$ is positive on its oriented tangent line. The high-dimensional analogue of a transverse submanifold is a submanifold $T \subset (Y,\alpha)$ onto which $\alpha$ restricts to a contact form. In particular, the $h$-principle for iso-contact embeddings of high codimension implies that
\begin{prp}[\cite{Eliashberg:IntroductionH}]
\label{prp:hprinciple}
Two positively transverse one-dimensional links in a $(2n-1)$-dimensional contact manifold with $n \ge 3$, say $L_i \subset (Y^{2n-1},\alpha)$ for $i=0,1$, are isotopic through transverse links if and only if they are homotopic.

Furthermore, when $L_i \coloneqq T_i \sqcup I_Y(T_i)$, $i=0,1,$ for some fixed-point free contact involution
$$I_Y \colon (Y^{2n-1},\alpha) \to (Y^{2n-1},-\alpha),$$
and if $T_0$ and $T_1$ are homotopic, then there exists a transverse isotopy from $L_0$ to $L_1$ through $I_Y$-invariant links.
\end{prp}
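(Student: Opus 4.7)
The plan is to deduce both statements from the $h$-principle for iso-contact embeddings of high codimension as stated in \cite{Eliashberg:IntroductionH}. In the present setting a positively transverse embedding of a $1$-manifold $L$ into $(Y^{2n-1},\alpha)$ is iso-contact, and the codimension $2n-2\ge 4$ lies well inside the high-codimension range. The associated space of formal solutions consists of pairs $(f,F_s)$ where $f \colon L \to Y$ is smooth and $F_s$ is a homotopy of fibrewise injective bundle maps $TL \to f^*TY$ starting at $df$ and ending at a bundle map on which $\alpha$ is pointwise positive; the $h$-principle asserts that the forgetful map from transverse embeddings to formal solutions is a weak homotopy equivalence.

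For the first statement, let $L_0,L_1$ be two homotopic positively transverse links. Because $\dim Y\ge 5$, I would first promote the smooth homotopy to a smooth isotopy of embeddings using Whitney general position. The positive-transverse frame data then extends along this isotopy, since ``$\alpha$-positive on the tangent line'' is an open and fibrewise convex condition on bundle injections $TL\to TY$. This produces a path of formal transverse embeddings connecting $L_0$ and $L_1$, which the $h$-principle, applied relative to the endpoints, converts into a genuine transverse isotopy.

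For the equivariant statement, suppose $T_0$ and $T_1$ are homotopic and write $L_i = T_i \sqcup I_Y(T_i)$. First I would use general position in $Y$ to turn the homotopy $T_0\simeq T_1$ into a smooth isotopy $T_t$. A parametric dimension count, $2+2-2n<0$ for $n\ge 3$, then lets me perturb further so that the tracks of $T_t$ and of $I_Y(T_t)$ are disjoint in $[0,1]\times Y$; in particular $T_t\cap I_Y(T_t)=\emptyset$ for every $t \in [0,1]$, giving a smooth isotopy of $I_Y$-invariant links $L_t=T_t\sqcup I_Y(T_t)$. To upgrade this to a transverse isotopy while preserving the symmetry, I would apply the $h$-principle to the component $T_t$ alone, supported inside an $I_Y$-invariant tubular neighbourhood of its track that remains disjoint from the track of $I_Y(T_t)$. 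The $I_Y$-image of the resulting perturbation automatically provides the matching transverse isotopy of the other component, yielding an $I_Y$-invariant transverse isotopy from $L_0$ to $L_1$.

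The main obstacle is the equivariant step, namely ensuring that the perturbation producing transversality of $T_t$ can be localised in an $I_Y$-invariant neighbourhood disjoint from $I_Y(T_t)$. This is exactly where the hypothesis $n\ge 3$ enters: it is the codimension bound that guarantees, via a generic perturbation, that the track of $T_t$ and its $I_Y$-image can be made disjoint in $[0,1]\times Y$.
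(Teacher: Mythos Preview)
Your proposal is correct and uses the same underlying ingredients as the paper, namely the high-codimension $h$-principle together with a general-position argument using the dimension bound $n\ge 3$. The order of operations for the equivariant part differs, however. The paper first invokes Part~1 directly to obtain a transverse isotopy $T_t$ from $T_0$ to $T_1$ (with no reference to $I_Y$), and only afterwards perturbs $T_t$ to general position; since transversality is an open condition, this small perturbation keeps $T_t$ transverse while forcing $T_t\cap I_Y(T_t)=\emptyset$ for all $t$. You instead first arrange the disjointness of the smooth tracks and then apply the $h$-principle a second time, with a support constraint, to upgrade to transversality. Both routes work, but the paper's is shorter: it avoids the localized second application of the $h$-principle altogether.

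One phrasing issue: you ask for an ``$I_Y$-invariant tubular neighbourhood of its track that remains disjoint from the track of $I_Y(T_t)$'', which is self-contradictory (an $I_Y$-invariant set containing the track of $T_t$ must contain the track of $I_Y(T_t)$). What you mean, and what your argument actually needs, is a tubular neighbourhood $U$ of the track of $T_t$ with $U\cap I_Y(U)=\emptyset$; the $h$-principle is applied inside $U$, and $I_Y$ then transports the result to $I_Y(U)$.
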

\begin{proof}
The first part is a direct application of the $h$-principle for iso-contact immersions \cite[24.1.2]{Eliashberg:IntroductionH}.

To construct the $I_Y$-equivariant transverse isotopy we argue as follows. First use the aforementioned $h$-principle to create a transverse isotopy $T_t$ from $T_0$ to $T_1$. It is then enough to perturb this isotopy to one which is in general position, to ensure that $T_t \sqcup I_Y(T_t)$ remains embedded for all $t$.
\end{proof}

An $(n-1)$-dimensional submanifold $\Lambda \subset (Y,\xi)$ of a $(2n-1)$-dimensional contact manifold is \textbf{Legendrian} if it is tangent to the contact distribution $\xi$ (this is the maximal possible dimension since $\xi$ is maximally non-integrable by definition of the contact condition). Equivalently, this can be formulated as the condition that
$$ \R \times \Lambda \subset (\R_\tau \times Y,d(e^\tau\alpha))$$
is a Lagrangian cone.

\subsection{Cotangent bundles}

Any cotangent bundle $T^*M$ has the \textbf{tautological symplectic structure} defined by the exact two-form $d\theta_M$. Here $\theta_M \in \Omega^1(T^*M)$ is the \textbf{tautological one-form}, which has the following local expression. Given local coordinates $(q^i,p_i)$ on $T^*M$, where $q^i$ are any local coordinates on $M$ and the $p_i$ are the coordinates induced by $q^i$ as the coefficients of the locally defined one-forms $p_i\,dq^i \in T^*M$, in other words $p_i$ are locally defined conjugate momentum coordinates, we can write $\theta_M=\sum_i p_i\,dq_i$. The cotangent fibres $T^*_pM$ as well as the zero-section $0_M \subset T^*M$ are Lagrangian with respect to the tautological symplectic structure. Furthermore, any diffeomorphism $\phi$ of $M$ lifts to a symplectic bundle-morphism of $(T^*M,d\theta_M)$ through the pull-back $(\phi^{-1})^*$ of the inverse. In fact, $(\phi^{-1})^*\theta_M=\theta_M,$ i.e.~the diffeomorphisms act on $T^*M$ while preserving the tautological one-form.

Any cotangent bundle $(T^*M,d\theta_M)$ has a canonical anti-symplectic involution $I_0$ given by fibre-wise multiplication by $-1$.

 Given a compact subset $U \subset T^*M$ that is fibre-wise star-shaped with respect to the origin in the fibres, e.g.~the radius-$r$ co-disc bundle $D^*_{g,r}M$ for some Riemannian metric $g$ on $M$, its boundary $Y=\partial U \subset T^*M$ is a contact manifold with contact form $\alpha \coloneqq \theta_M|_{TY}.$ Furthermore, the symplectisation of $(Y,\alpha)$ has a natural embedding in $T^*M$ via
\begin{gather*} (\R_\tau \times Y,e^\tau\alpha) \xrightarrow{\cong} (T^*M \setminus 0_M,\theta_M)\\
(\tau,y) \mapsto e^\tau\cdot y,
\end{gather*}
which preserves the primitives of the symplectic forms.

In the case when the star-shape subset $U$ moreover is invariant under the anti-symplectic involution $I$, it follows that $I_0$ is induced by a co-orientation reversing contactomorphism without fixed points 
$$I_Y \colon (Y,\ker \alpha) 
\to (Y,\ker \alpha)$$
that satisfies $I_Y^*\alpha=-\alpha$. Namely, $I$ is given by extending $I_Y$ to $T^*M \setminus 0_M \cong \R_\tau \times Y$ by $\id_\R$ on the $\R_\tau$-factor.

\subsection{Isotropic, Lagrangian, and symplectic cobordisms}

\label{sec:cob}

It will be necessary to consider also cobordisms embedded in
$$(\R_\tau \times Y,d(e^t\alpha)) $$
beyond simply trivial cones over submanifolds in $Y$. We say that a properly embedded submanifold $C$ of the symplectisation has \textbf{conical ends} if it coincides with
$$ (-\infty,-1] \times K_- \:\: \sqcup \:\: [1,+\infty) \times K_+ \subset (\R_\tau \times Y,d(e^t\alpha)) $$
outside of a compact subset. In this case we say that $C$ is a \textbf{cobordism from $K_- \subset Y$ to $K_+ \subset Y$ with conical ends}.

A cobordism $C_0$ from $K_-$ to $K$, and a cobordism $C_1$ from $K$ to $K_+$ can be concatenated in a manner to form a cobordism from $K_-$ to $K_+$. This is done by first translating $C_0$ (resp. $C_1$) sufficiently far in the negative (resp. positive) $\R_\tau$-direction, so that its positive (resp. negative) end contains the level $\{\tau=0\}$. After this has been done, we can form the concatenation in the following way:
\begin{dfn}
\label{dfn:concatenation}
After a translation as above, the union
$$C_0 \odot C_1 \coloneqq C_0\cap \{\tau \le 0\} \: \cup \: C_1 \cap \{\tau \ge 0\} $$
is a cobordism from $K_-$ to $K_+$ which is called the \textbf{concatenation} of $C_0$ and $C_1$.
\end{dfn}

We recall the classical Weinstein's Lagrangian neighbourhood theorem as well as a mild generalisation to the equivariant setting in the presence of an anti-symplectic involution.
\begin{thm}[Weinstein's Lagrangian Neighbourhood Theorem \cite{Weinstein:Lagrangian}]
\label{thm:weinstein}
Any proper Lagrangian embedding $\phi \colon L \hookrightarrow (X,\omega)$ can be extended to a symplectic embedding
$$\Phi \colon (D^*L,d(p\,dq)) \hookrightarrow (X,\omega), \:\: \Phi|_{0_L}=\phi,$$
where $D^*L$ is the unit cotangent bundle for some choice of Riemannian metric on $L$.

In addition,
\begin{enumerate}
 \item Let $(X,\omega)=(\R_\tau \times Y,d(e^\tau\alpha))$, and assume that there exists an embedding $\phi_\Lambda \colon \Lambda \hookrightarrow Y$ and coordinates $(s,x) \in I \times \Lambda \hookrightarrow L$, with $I \subset \R_s$ connected, such that $(\tau,y)=\phi(s,x)=(s,\phi_\Lambda(x))$. Using $\phi^t_{\partial_s}$ and $\phi^t_{\partial_\tau}$ to denote the time-$t$ flow of the respective coordinate vector fields (i.e.~translations of the $\R$-factors in the domain and target) we can assume that the symplectomorphism $\Phi$ above satisfies
 $$ \Phi \circ (e^t\cdot(\phi^{-t}_{\partial_s})^*)=\phi_{\partial_\tau}^t\circ \Phi, \:\: \text{on} \:\: D^*(I \times \Lambda) \cap \{s+t \in I\},$$
 i.e.~translation in the symplectisation corresponds to translation and fibre-wise rescaling in the domain $D^*(I \times \Lambda) \subset D^*L$; 
 \item If $I \colon (X,\omega) \to (X,-\omega)$ is an anti-symplectic involution that fixes $L$ set-wise, but which is fixed-point free on $L$, then $\Phi$ can be assumed to satisfy the property that $\Phi^{-1} \circ I\circ \Phi$ coincides with the anti-symplectic involution $I_0 \circ (I|_L)^* \colon D^*L \to D^*L$.
 \item When the conditions in (1) and (2) hold simultaneously, and moreover $I(\tau,y)=(\tau,I_Y(y))$ is satisfied (i.e.~the anti-symplectic involution is induced by a contactomorphism of $(Y,\ker \alpha)$ that satisfies $I_Y^*\alpha=-\alpha$), then we may assume that the conclusion of (2) holds everywhere, simultaneously as the conclusion of (1) in the subset where the the cobordism is conical. 
 \end{enumerate}
\end{thm}
\begin{proof}
The case without the additional assumptions is due to Weinstein \cite{Weinstein:Lagrangian}.

(1): Since a conical Lagrangian cobordism is a cone over a Legendrian, this can be seen to follow from the strict standard Legendrian neighbourhood theorem \cite[Theorem 6.2.2]{Geiges:Intro} by which any Legendrian $\Lambda \subset (Y,\alpha)$ has a neighbourhood which admits a contact-form preserving identification with a neighbourhood of the zero-section in $(J^1\Lambda=T^*\Lambda \times \R_z,dz+\theta_\Lambda)$. In the conical subset, one can use the fact that the cone over such a standard neighbourhood in the symplectisation is symplectomorphic to a neighbourhood of the zero-section $\R \times \Lambda \subset T^*(\R \times \Lambda)$.

(2): Since $I$ is assumed to be without fixed-points near $L$, the second part can be readily proven by the same method as the standard version of Weinstein's Lagrangian neighbourhood theorem, but after making all choices $I$-invariant. The details are left to the reader.

(3): This follows from a combination of the techniques in the proofs of (1) and (2).
\end{proof}

Here we are mainly interested in cobordisms that are symplectic surfaces, in which case the ends are cones over transverse knots. However, to produce these symplectic cobordisms, we need to go pass via constructions of Lagrangian cobordisms, in which case the ends are cones over Legendrian submanifolds.

An isotopy $T_t \subset (Y,\xi)$ of transverse knots yields a symplectic version of the trace cobordism in the following way. We may assume that the isotopy $T_t$ is constantly equal to $T_\pm$ whenever $\pm t \ge 1$. 
\begin{lma}
\label{lma:tracecob}
The \textbf{symplectic trace cobordism} from $T_-$ to $T_+$ given by
$$ \mathcal{T}_{\{T_t\}} = \{(\tau,y);\: y \in T_{v\tau} \} \subset (\R \times Y,d(e^\tau\alpha))$$
is symplectic if the speed $v >0$ is taken to be sufficiently small. Furthermore,
\begin{itemize}
 \item When $Y=U^*M$ and $T_t$ are $I_{ Y}$-invariant, the trace cobordism is $I_0$-invariant as well; and
 \item The trace cobordism depends smoothly on the isotopy $T_t$.
\end{itemize}
\end{lma}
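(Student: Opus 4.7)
My plan is to compute the pullback of the symplectic form $d(e^\tau\alpha)$ to the trace cobordism directly in a convenient parametrisation and read off non-degeneracy from (positive) transversality. Let $\gamma_t \colon \Lambda \to Y$ be a smooth family of embeddings with image $T_t$, constant in $t$ outside of $[-1,1]$. Parametrise the trace by $\Phi(\tau, s) = (\tau, \gamma_{v\tau}(s))$, so that the pushforwards of the coordinate vector fields are $d\Phi(\partial_\tau) = \partial_\tau + v\cdot(\partial_t\gamma)_{v\tau}(s)$ and $d\Phi(\partial_s) = \gamma'_{v\tau}(s)$, where $\gamma'$ denotes the derivative along $\Lambda$.

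Writing $\omega = e^\tau(d\tau \wedge \alpha + d\alpha)$ and noting that $d\tau$ annihilates everything tangent to $Y$ while $\alpha$ and $d\alpha$ annihilate $\partial_\tau$, the pullback becomes
$$\Phi^*\omega = e^\tau\bigl[\alpha(\gamma'_{v\tau}(s)) + v\cdot d\alpha((\partial_t\gamma)_{v\tau}(s),\, \gamma'_{v\tau}(s))\bigr]\, d\tau \wedge ds.$$
By the positive transversality assumption, the first summand is bounded below by some $c > 0$ uniformly in $(t,s)$ with $t \in [-1,1]$, using compactness of $\Lambda$ and the $I$-supports of the isotopy. The second summand vanishes identically outside of this same compact $t$-range (where $\partial_t\gamma \equiv 0$), and is uniformly bounded on it. Hence for $v > 0$ small enough, the coefficient is strictly positive everywhere, so $\Phi^*\omega$ is nowhere vanishing, i.e.~the trace is symplectic. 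Moreover, the estimate is stable under small $C^1$-perturbations of the isotopy, so the threshold $v$ may be chosen uniformly over a compact family of isotopies, giving smooth dependence.

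For the $I_0$-equivariant statement in the case $Y = U^*M$, the involution acts by $I_0(\tau,y) = (\tau, I_Y(y))$, and since each $T_{v\tau}$ is assumed $I_Y$-invariant, the trace $\{(\tau,y) : y \in T_{v\tau}\}$ is pointwise preserved by $I_0$ as a set. No modification of the above argument is needed; the claim is immediate from the definition of $\mathcal{T}_{\{T_t\}}$. I do not expect any genuine obstacle in this lemma: the only place requiring care is the compactness/uniformity argument ensuring that a single $v$ works for the whole isotopy, which is where the hypothesis that the isotopy is eventually constant is essential.
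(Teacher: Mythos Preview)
The paper states this lemma without proof, so there is nothing to compare against; your direct computation of $\Phi^*\omega$ is the standard and correct way to establish it. One small remark: you invoke ``positive transversality'' to get $\alpha(\gamma'_{v\tau})\ge c>0$, whereas the lemma only assumes the $T_t$ are transverse. This is harmless, since transversality means $\alpha$ is nowhere zero on the tangent line, so after orienting $\Lambda$ appropriately you have $\alpha(\gamma')>0$, and for a surface ``symplectic'' just means $\Phi^*\omega$ is nowhere vanishing.
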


\subsection{Symplectic push-offs of Lagrangian cobordisms}

A two-dimensional Lagrangian cobordism can be pushed-off by a $C^\infty$-small deformation to a symplectic cobordism between transverse knots. This was shown e.g.~in \cite[Lemma 4.1]{Cao}, and we recall the proof here. For completeness we give a completely general version of the construction that can be applied in any dimension.

\begin{prp}
\label{prp:pushoff}
Let $\phi \colon L \hookrightarrow (\R_\tau \times Y,d(e^\tau\alpha))$ be an even-dimensional Lagrangian cobordism from the Legendrian $\Lambda_-$ to $\Lambda_+$. Assume that $\Lambda_\pm$ admit contact forms $\alpha_\pm \in \Omega^1(\Lambda_\pm)$ for which the induced one-form on $L$ that coincides with $e^{ \tau \circ \phi}\cdot \alpha_\pm$ on the positive and negative cylindrical ends, respectively, extends to a globally defined primitive $\eta \in \Omega^1(L)$ of a symplectic form $d\eta$. Then, there exists a smooth family of cobordisms
$$\phi_t \colon L \hookrightarrow \R \times Y, \:\: \phi_0=\phi, \:\: \phi_t^*e^\tau \alpha=t\eta,\:\: t \in [0,\epsilon),$$
with conical ends, for some small $\epsilon>0$; hence $\iota_t(L)$ is symplectic for all $t \in (0,\epsilon)$. 

Moreover, if $I(\tau,y)=(\tau,I_Y(y))$ is an $\R_\tau$- equivariant anti-symplectic involution of $(\R \times Y,d(e^\tau\alpha))$ that fixes $L$ set-wise, has no fixed points on $L$, and satisfies $(I|_L)^*\eta=-\eta$, then $\iota_t$ can all be assumed to be $I$-invariant.
\end{prp}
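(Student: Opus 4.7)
The plan is to apply Weinstein's Lagrangian neighbourhood theorem (Theorem \ref{thm:weinstein}) to identify a neighbourhood of $\phi(L)$ in $(\R_\tau \times Y, d(e^\tau\alpha))$ with a neighbourhood of the zero-section in $(T^*L, d\theta_L)$, and then to define the push-off $\phi_t$ as the image of the graph of the one-form $t\eta$ on $L$ inside $T^*L$, for $t \ge 0$ small enough that $t\eta$ lies in this neighbourhood. Since the graph of any section $\sigma \in \Omega^1(L)$ pulls the tautological one-form $\theta_L$ back to $\sigma$, one immediately obtains
$$\phi_t^*\,d(e^\tau\alpha) \;=\; t\,d\eta,$$
which is a symplectic form for $t > 0$ by the hypothesis on $\eta$, so each $\phi_t(L)$ is symplectic. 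The normalisation $\phi_t^* e^\tau\alpha = t\eta$ will follow once the Weinstein embedding is arranged to pull the primitive $e^\tau\alpha$ back to $\theta_L$; the potential exact discrepancy in the compact part of $L$ can be absorbed into the choice of primitive $\eta$ of the symplectic form on $L$.

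To control the cylindrical ends, I would invoke part (1) of Theorem \ref{thm:weinstein}: on each end $[1,\infty)_\tau \times \Lambda_+$ of the cobordism it yields a Weinstein embedding intertwining the $\R_\tau$-translation vector field on $\R \times Y$ with the combined translation-plus-fibre-rescaling flow on $T^*(\R \times \Lambda_+)$. Under this identification the section $\eta = e^\tau\alpha_+$ has the property that a translation by $s$ in $\tau$ scales its value by $e^s$, which is precisely the invariance required to be preserved by the combined flow. Hence the graph of $t\eta$ over the end is a symplectic cone over the transverse knot in $(Y,\ker\alpha)$ obtained by a small scaling of $\Lambda_+$ inside its standard Legendrian neighbourhood, and analogously at the negative end. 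Smooth dependence on $t$ and the initial condition $\phi_0 = \phi$ are built into the construction.

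For the equivariant part, I would upgrade to part (3) of Theorem \ref{thm:weinstein}, which delivers a single Weinstein embedding that simultaneously respects the conical structure at both ends and conjugates the model involution $I_0 \circ (I|_L)^*$ on $T^*L$ to the ambient involution $I$. The hypothesis $(I|_L)^*\eta = -\eta$ is exactly the condition under which the graph of $t\eta$ is preserved set-wise by $I_0 \circ (I|_L)^*$, since pulling back by $I|_L$ and then applying the fibrewise $(-1)$ from $I_0$ cancel on this section. The main obstacle in the argument is therefore not any of the individual verifications, but rather the construction of a single Weinstein model that simultaneously matches the two conical ends and is compatible with the involution; this is precisely what the combined statement (3) of Theorem \ref{thm:weinstein} is designed to provide, after which the remainder of the proof reduces to the pointwise computation with graphs of one-forms in $T^*L$ indicated above.
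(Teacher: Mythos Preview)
Your proposal is correct and follows essentially the same route as the paper: choose a Weinstein neighbourhood of $\phi(L)$ via Theorem \ref{thm:weinstein} (using parts (1) and (3) to handle the conical ends and the involution), and take $\phi_t$ to be the image of the section $t\eta \subset T^*L$. Your write-up in fact supplies more detail than the paper does on why the ends remain cylindrical and why the graph of $t\eta$ is $I_0\circ (I|_L)^*$-invariant.
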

\begin{rmk}
The one-form $\eta \in \Omega^1(L)$ makes $(L,\eta)$ into a complete Liouville cobordism. In particular, $L$ must be even-dimensional and have a non-empty convex conical end $\Lambda_+ \neq \emptyset$ holds by Stokes' theorem. 
\end{rmk}
\begin{proof}
We choose a standard Weinstein neighbourhood
$$ \Phi \colon D^*L \hookrightarrow (\R \times Y,d(e^\tau\alpha))$$
extending the Lagrangian embedding $\phi \colon L \hookrightarrow \R \times Y$
as in Theorem \ref{thm:weinstein} that is compatible with the symplectic involution. The cobordism is then simply given by the image of the section of $t\eta$ in the cotangent bundle $D^*L$.

Moreover, when $L$ is invariant under the anti-symplectic involution $I$, the sections $t\eta$ are invariant under the corresponding anti-symplectic involution $I_0 \circ (I|_L)^*$ on the cotangent bundle.
\end{proof}

\section{Conical submanifolds of cotangent bundles}

For constructing and deforming $I$-invariant submanifolds, it will be useful to first introduce the notion of a conical submanifold of a cotangent bundle. 

\begin{dfn}Consider an immersion $\gamma \colon B \looparrowright M$ and a sub-bundle $E \to B$ of the pull-back bundle $\gamma^*(T^*M) \to B$. The natural immersion $E \looparrowright T^*M$ that lifts $B \looparrowright M$ is said to be a \textbf{conical} immersion, and we denote its image by $\mathcal{C}_E \subset T^*M$. If an immersion coincides with a conical immersion near $0_M$, it is said to be \textbf{conical near $0_M$}. 
\end{dfn}
Note that conical submanifolds and immersions are invariant under $I_0$.

\begin{ex}
 \label{ex:cotangent}
 For any submanifold $B \subset M = 0_M$ there are two natural conical extensions to $T^*M$ with very different properties:
 \begin{enumerate}
 \item For the conormal bundle of $B \subset M$, i.e.~the bundle of cotangent vectors that annihilate $TB$, the corresponding conical submanifold in $T^*M$ is Lagrangian. This conical submanifold is called the \textbf{conormal bundle of $B$.}
 \item There is a canonical bundle-isomorphism of $T^*M \to M$ and the vertical sub-bundle
 $$V \coloneqq \ker\left(T(T^*M)|_{0_M} \xrightarrow{T\pi} TM\right) \cong T^*M$$
 as vector bundles over $M=0_M \subset T^*M$. For any compatible almost complex structure $J$ that interchanges the two sub-bundles $V,T0_M\subset T(T^*M)$ we get a symplectic conical submanifold given by the sub-bundle $J(TB) \subset T^*M$ along $B \subset 0_M$, i.e.~an embedding of the cotangent bundle $T^*B \subset T^*M$ of $B$. This conical submanifold will be called the \textbf{cotangent extension of $B$}.
 \end{enumerate}
 The construction also works for immersions $B\hookrightarrow M$, in which case it produces conical Lagrangian or symplectic immersions.
\end{ex}

In order to better understand the conical submanifolds near $0_M \subset T^*M$, it is often useful to pass to the symplectisation. Clearly a conical immersion is $\R_\tau$-invariant inside the coordinates $\R_\tau \times Y \cong T^*M \setminus 0_M,$ from which we conclude that
\begin{lma}
\label{lma:conicaltransverse}
Conical symplectic and Lagrangian immersions correspond to cones over submanifolds in the unit cotangent bundle that are transverse and Legendrian, respectively.
\end{lma}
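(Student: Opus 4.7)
The plan is to pass to the symplectisation coordinates $\R_\tau \times Y \cong T^*M \setminus 0_M$, $(\tau,y) \mapsto e^\tau\cdot y$, and observe that a conical immersion is, by its very definition as a sub-bundle $E \to B \looparrowright M$, invariant under the fibre-wise scaling action $(q,p) \mapsto (q,tp)$ for $t>0$. Under the above identification this fibre-wise scaling corresponds to the $\R_\tau$-translation $(\tau,y)\mapsto (\tau+\log t, y)$. Hence the image $\mathcal{C}_E \setminus 0_M$ must be of the form $\R_\tau \times T$, where $T \coloneqq \mathcal{C}_E \cap Y$ is the (immersed) submanifold cut out in the unit cotangent bundle. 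Conversely, any cone $\R_\tau \times T$ with $T \subset Y$ arises from a sub-bundle of $T^*M$ over the projection of $T$ to $M$, and therefore is a conical immersion.

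Next I would compute the pullback of the symplectic form to such a cone. Writing $d(e^\tau\alpha) = e^\tau(d\tau \wedge \alpha + d\alpha)$, its restriction to $\R_\tau \times T$ is
\[
e^\tau\bigl(d\tau \wedge (\alpha|_T) + d\alpha|_T\bigr).
\]
For this form to be non-degenerate on $\R_\tau \times T$, the form $\alpha|_T$ must itself be a contact form on $T$; equivalently, $T \pitchfork \xi$ as a transverse submanifold of the contact manifold $(Y,\ker\alpha)$. This yields the equivalence between conical symplectic immersions and cones over transverse submanifolds. Non-degeneracy forces $T$ to be odd-dimensional, matching the fact that conical symplectic immersions have even dimension.

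For the Lagrangian case, the condition $(\iota_{\R\times T})^*d(e^\tau\alpha)=0$ together with the expression above forces $\alpha|_T \equiv 0$ and $d\alpha|_T \equiv 0$; but the second condition is automatic from the first by Cartan's formula, so $\R \times T$ is isotropic iff $\alpha|_T = 0$, iff $T$ is tangent to $\xi$. Comparing dimensions, $T$ is maximal isotropic precisely when $\dim T = n-1$, i.e.~when $T$ is Legendrian. This gives the second correspondence.

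I expect no genuine obstacle here: the only thing to double-check is that the fibre-wise $\R_{>0}$-action on $T^*M \setminus 0_M$ really does match $\R_\tau$-translation in the symplectisation (which is the defining property of the identification and already recorded in the excerpt), and that the point-wise non-degeneracy and isotropy conditions on the cone translate directly to the transverse/contact and Legendrian conditions on $T$. Both are immediate from the algebraic structure of $d(e^\tau\alpha)$, so the proof reduces to a short unpacking of definitions.
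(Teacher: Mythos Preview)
Your proposal is correct and follows exactly the approach indicated by the paper, which does not give a formal proof but simply states (just before the lemma) that a conical immersion is $\R_\tau$-invariant in the symplectisation coordinates and then declares the lemma as an immediate consequence. Your write-up merely fills in the elementary computation with $d(e^\tau\alpha)=e^\tau(d\tau\wedge\alpha+d\alpha)$ that the paper leaves implicit. One small terminological point: the implication $\alpha|_T=0 \Rightarrow d\alpha|_T=0$ is naturality of the exterior derivative under pullback rather than Cartan's magic formula, and in higher dimensions the paper's notion of ``transverse'' is precisely that $\alpha|_T$ be a contact form (not literally $T\pitchfork\xi$, which would be a weaker condition), so your two phrasings agree only in the one-dimensional case relevant here.
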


\subsection{Deformation of conical symplectic surfaces}

For simplicity we here restrict our attention to two-dimensional conical symplectic submanifolds, which is the only case that we will need here. Naturally, many of these results have generalisations to conical submanifolds of higher dimension.

\begin{lma}
\label{lma:cylindrisation}
Let $\Sigma \subset T^*M$ be an $I_0$-invariant symplectic embedding of a surface of real dimension two that intersects $0_M$ cleanly in a one-dimensional link $L=\Sigma \cap 0_M$. Then $\Sigma$ is Hamiltonian isotopic through surfaces that all are
\begin{itemize}
 \item $I_0$-invariant, and
 \item intersect $0_M$ in $L$,
\end{itemize}
to a surface that is conical near $0_M$. The Hamiltonian isotopy can, moreover, be assumed to be supported in an arbitrarily small compact subset of $0_M$.
\end{lma}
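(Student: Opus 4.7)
The strategy is to construct a canonical conical model $\mC_E$ for $\Sigma$ along $L$, to deform $\Sigma$ to coincide with $\mC_E$ in a neighbourhood of $L$ through an $I_0$-invariant family of symplectic surfaces all meeting $0_M$ in $L$, and then to apply Theorem~\ref{thm:HamiltonianThom}. At each $p\in L$, the involution $dI_0$ preserves $T_p\Sigma$: the $+1$-eigenspace is $T_p\Sigma\cap T_p0_M=T_pL$ by clean intersection, so the $-1$-eigenspace is a line $E_p\subset T^*_{\pi(p)}M$. The line subbundle $E\subset T^*M|_L$ determines, via Example~\ref{ex:cotangent}(2), an $I_0$-invariant conical submanifold $\mC_E\subset T^*M$ meeting $0_M$ cleanly along $L$ with $T\mC_E|_L=T\Sigma|_L$, and $\mC_E$ is symplectic because $\Sigma$ is.

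An $I_0$-equivariant symplectic tubular neighbourhood theorem for $\mC_E$, obtained by averaging the auxiliary data in the standard Moser argument, identifies a neighbourhood of $\mC_E$ in $T^*M$ with a neighbourhood of the zero section of the symplectic normal bundle $N\mC_E$. In this model, $\Sigma$ near $L$ appears as the graph of an $I_0$-equivariant section $s$ of $N\mC_E$, and the equality $T\Sigma|_L=T\mC_E|_L$ forces both $s$ and its $1$-jet to vanish along $L$. Pick an $I_0$-invariant cutoff $\chi$ supported in a small neighbourhood $U$ of $L$ in $\mC_E$ and equal to $1$ near $L$; by shrinking $U$, the restriction $s|_U$ becomes $C^1$-small because $|s|=O(d_L^2)$ and $|ds|=O(d_L)$, where $d_L$ denotes the distance to $L$. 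Setting $\Sigma_t$ to be the graph of $(1-t\chi)s$ inside the neighbourhood and equal to $\Sigma$ outside, each $\Sigma_t$ is a symplectic $I_0$-invariant surface (a $C^1$-small perturbation of $\mC_E$ on the support of $\chi$), and $\Sigma_t\cap 0_M=L$ near $L$ because the leading-order fibre component of $\mC_E$ dominates the $O(d_L^2)$ normal perturbation.

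The family $\{\Sigma_t\}_{t\in[0,1]}$ is compactly supported in $U$, which can be chosen arbitrarily close to $L\subset 0_M$, so Theorem~\ref{thm:HamiltonianThom} produces the desired compactly supported Hamiltonian isotopy $\phi^t$ realising $\Sigma_t=\phi^t(\Sigma)$. The main technical obstacle is the $I_0$-equivariant symplectic neighbourhood theorem for $\mC_E$ in a form compatible with the zero section $0_M$; once this is granted, the vanishing $1$-jet of $s$ along $L$ makes the remaining verifications routine.
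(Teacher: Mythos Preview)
Your argument is correct and follows the same strategy as the paper's proof: build the conical model tangent to $\Sigma$ along $L$, interpolate via a $C^1$-small deformation supported near $L$, and apply Theorem~\ref{thm:HamiltonianThom}. The paper's proof is a two-line sketch that simply asserts the deformation exists, whereas you carry out the interpolation explicitly by writing $\Sigma$ as a graph over $\mC_E$ in an equivariant symplectic normal neighbourhood and cutting off; this is precisely how one would make the paper's sketch rigorous, and in particular your use of the equivariant neighbourhood is what guarantees the $I_0$-invariance of each $\Sigma_t$, a point the paper leaves implicit. One minor remark: the reference to Example~\ref{ex:cotangent}(2) is slightly off, since that example uses a compatible $J$ to produce the line bundle, while you are taking $E$ directly from the $-1$-eigenspace of $dI_0$ on $T\Sigma|_L$; what you are really invoking is the general conical-immersion construction $\mC_E$ from the definition preceding that example.
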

\begin{proof}
Consider a conical embedding $\Sigma_0 \subset T^*M$ of $L \times \R$ that is tangent to $\Sigma$ along $L$. Hence, $\Sigma_0$ is $I_0$-invariant and symplectic. Since $C^1$-small deformations of symplectic surfaces remain symplectic, it is simply a matter of perturbing $\Sigma$ through $I_0$-invariant surfaces in a sufficiently small neighbourhood of $L$ in order to make it coincide with $\Sigma_0$ there. We can then invoke Theorem \ref{thm:HamiltonianThom} to produce the sought Hamiltonian isotopy.
\end{proof}

\begin{lma}
Let $\Sigma_i \subset T^*M$, $i=0,1,$ be two symplectic embeddings of a surface of real dimension two, where both surfaces are conical near $0_M$ and intersects $0_M$ cleanly in the same one-dimensional link $L=\Sigma_i \cap 0_M$. Then $\Sigma_0$ can be made to agree with $\Sigma_1$ near $0_M$ by a Hamiltonian isotopy $\Sigma_t=\phi^t(\Sigma_0)$ of symplectic surfaces that has support in an arbitrarily small neighbourhood of $0_M$. Furthermore, we can assume that $\Sigma_t$ all symplectic surfaces that intersect $0_M$ in the same link $L$, and which all are conical near $0_M$.
\end{lma}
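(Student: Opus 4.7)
The plan is to reduce the problem to an interpolation at the level of rank-one sub-bundles of $T^*M|_L$, to realise that interpolation as a compactly supported isotopy of $I_0$-invariant symplectic surfaces by splicing in a trace cobordism, and then to lift it to a Hamiltonian isotopy via the Hamiltonian Thom Isotopy Lemma.

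By the conical hypothesis, there is a neighbourhood of $L$ in $T^*M$ on which $\Sigma_i = \mathcal{C}_{E_i}$ for rank-one sub-bundles $E_i \subset \gamma^*(T^*M)|_L$, where $\gamma\colon L\hookrightarrow M$. Symplecticity of $\mathcal{C}_{E_i}$ together with clean intersection with $0_M$ is equivalent to the fibrewise condition that $E_i(p)$ does not lie in the two-dimensional conormal subspace $N^*_pL \subset T^*_pM$. The space of such ``non-conormal'' lines in $T^*_pM$ is $\RP^2\setminus\RP^1$, a contractible affine plane, so the bundle of non-conormal lines over $L$ has a path-connected space of sections. I would fix a smooth path $E_t$, $t\in[0,1]$, of non-conormal rank-one sub-bundles from $E_0$ to $E_1$.

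Passing to $T^*M\setminus 0_M \cong \R_\tau \times Y$ for an $I_0$-invariant unit cotangent bundle $Y$, Lemma~\ref{lma:conicaltransverse} identifies $\{\mathcal{C}_{E_t}\setminus L\}$ with a smooth isotopy of cones $\R\times T_t$ over $I_Y$-invariant positively transverse links $T_t \subset Y$. For each $s\in [0,1]$, Lemma~\ref{lma:tracecob} applied to the reparametrised partial isotopy $\{T_{s(1-u)}\}_{u\in[0,1]}$ produces an $I_0$-invariant symplectic trace cobordism from $T_s$ to $T_0$, varying smoothly in $s$. Using Definition~\ref{dfn:concatenation}, at each $s$ I would concatenate the cone $\mathcal{C}_{E_s}$ for very negative $\tau$, the above trace cobordism placed in a thin transition window, and the original $\Sigma_0$ for larger $\tau$. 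Choosing the window sufficiently close to $0_M$ confines the support of the resulting smooth isotopy $\Sigma_0^s$ to an arbitrarily small neighbourhood of $0_M$. By construction each $\Sigma_0^s$ is $I_0$-invariant and symplectic, conical near $0_M$, and intersects $0_M$ cleanly in $L$, with $\Sigma_0^0 = \Sigma_0$ and $\Sigma_0^1$ agreeing with $\mathcal{C}_{E_1} = \Sigma_1$ near $0_M$.

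Finally, Theorem~\ref{thm:HamiltonianThom} realises this compactly supported smooth isotopy by a Hamiltonian isotopy; I expect this last step to be the main obstacle, since the statement quoted is not explicitly $I_0$-equivariant. One handles it by re-running the proof with all choices made $I_0$-invariantly using the equivariant Weinstein neighbourhood provided by Theorem~\ref{thm:weinstein}(2), and anti-symmetrising the generating Hamiltonian under $I_0$: this is legitimate because $I_0^*\omega = -\omega$ forces anti-invariant Hamiltonians to correspond to $I_0$-equivariant Hamiltonian vector fields, which on the invariant submanifolds $\Sigma_0^s$ already realise the correct velocity field of the isotopy. Confining the transition window further inside $T^*M$ then yields a Hamiltonian isotopy with support in an arbitrarily small neighbourhood of $0_M$, as required.
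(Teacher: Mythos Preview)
Your approach is essentially the same as the paper's: interpolate the sub-bundles $E_t$ through complements of the conormal bundle, pass to transverse links $T_t$ in $U^*M$, build a one-parameter family of trace cobordisms, concatenate with the cones, and invoke the Hamiltonian Thom Isotopy Lemma. Your write-up is in fact more explicit than the paper's about gluing the cobordism back onto the original $\Sigma_0$ at the positive end.

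One correction: your final paragraph is unnecessary and slightly misstated. The lemma as given does not assert that the Hamiltonian isotopy is $I_0$-equivariant, so there is no obstacle to overcome. Moreover, your claim that ``each $\Sigma_0^s$ is $I_0$-invariant'' is not justified globally: the surfaces agree with $\Sigma_0$ away from a neighbourhood of $0_M$, and $\Sigma_0$ is only assumed conical (hence $I_0$-invariant) \emph{near} $0_M$, not everywhere. Simply drop the equivariance discussion and apply Theorem~\ref{thm:HamiltonianThom} directly, exactly as the paper does.
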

\begin{proof}
Let $E_i \to L$, $i=0,1$, be the sub-bundle of $T^*M|_L \to L$ that corresponds to the conical symplectic submanifold that agrees with $\Sigma_i$ near $0_M$. The symplectic condition implies that $E_i$ are complements of the conormal bundle. Since any two such complementary bundles can be deformed through sub-bundles that are complementary to the conormal bundle, we get a smooth family of conical symplectic surfaces $\mathcal{C}_{E_t,L}$ that connects $\mathcal{C}_{E_0,L}$ and $\mathcal{C}_{E_1,L}$.

The isotopy $\mathcal{C}_{E_t,L}$ of conical symplectic surfaces induces an isotopy of $I_{ Y}$-invariant transverse knots $T_t \subset U^*M$; see Lemma \ref{lma:conicaltransverse}. Using Lemma \ref{lma:tracecob}, we produce a one-parameter family of conical $I_0$-invariant symplectic trace-cobordisms $\mathcal{T}_t \subset \R \times U^*M$ from $T_0$ to $T_t$, with a smooth dependence on $t \in [0,1]$. Furthermore $\mathcal{T}_0=\R \times T_0$.

For the induced family of concatenations of $\mathcal{T}_t$ and $\mathcal{C}_{E_t,B}$ (see Definition \ref{dfn:concatenation}), we obtain a smooth isotopy of symplectic surfaces that coincides with $\mathcal{C}_{E_t,B}$ near $0_M$. The isotopy can moreover be assumed to be supported in an arbitrarily small neighbourhood of $0_M$. Finally, the sought Hamiltonian isotopy is produced by invoking Theorem \ref{thm:HamiltonianThom}.
\end{proof}

\subsection{Construction of two-dimensional symplectic cobordisms}
\label{sec:sympconstr}

The surfaces provided by Theorem \ref{thm:main} will be constructed by concatenations of standard pieces; see Definition \ref{dfn:concatenation}. In this subsection we construct these needed standard pieces.

More precisely, we are interested in constructing two-dimensional symplectic cobordisms. This will be done by relying on constructions on exact two-dimensional Lagrangian cobordisms, which can be turned into symplectic surfaces through the push-off construction in Proposition \ref{prp:pushoff}.

However, two-dimensional Lagrangian cobordisms live inside an ambient four-dimensional symplectic manifold. Since we want to construct two-dimensional symplectic cobordisms inside six-dimensional symplectic manifolds, the surfaces that we construct inside a four-dimensional symplectic manifold will be embedded into six-dimensions via the stabilisation
$$ (\R_\tau \times Y^3 \times \{0\},d(e^\tau\alpha)) \hookrightarrow (\R_\tau \times Y^3 \times B^{2n},d(e^\tau(\alpha+x\,dy))$$
which is the symplectisation of the contact product $(Y,\alpha) \times (B^{2n},x\,dy)$, where $x\,dy=\sum^n_{i= 1} x_i\,dy_i$ is a primitive of the standard linear symplectic form.

When we also are considering an anti-symplectic involution, we will assume that it is induced by a contact-involution
$$I_Y \times I_{\C^n} \colon (Y \times B^{2n},\alpha+y\,dx) \to (Y \times B^{2n},-\alpha-y\,dx),$$
which acts while preserving the two factors, and which is given by complex conjugation in the second factor.

Before we start with the constructions we will recall the following useful identification. Consider the standard contact form on the jet-space $J^1S^{k-1}=T^*S^{k-1} \times \R_z$ which is given by $\alpha_0=dz-\theta_{S^{k-1}}$, where $\theta_{S^{k-1}} \in \Omega^1(T^*S^{k-1})$ is the tautological one-form. There is a fixed-point free contact involution 
\begin{gather*}
I_{J^1S^{k-1}} \colon (J^1S^{k-1},dz-\theta_{S^{k-1}}) \to (J^1S^{k-1},-(dz-\theta_{S^{k-1}})),\\
 T^*S^{k-1} \times \R \ni (y,z) \mapsto \left(I_0^{T^*S^{k-1}} \circ A^*(y),-z\right),
\end{gather*}
where $A \colon S^{k-1} \to S^{k-1}$ is the antipodal map, and
$$I_0^{T^*S^{k-1}} \circ A^* \colon (T^*S^{k-1},\theta_{S^{k-1}}) \to (T^*S^{k-1},-\theta_{S^{k-1}})$$
is the anti-symplectic involution obtained by composing the symplectomorphism induced by the antipodal map with fibre-wise multiplication by $-1$.
\begin{lma}[Arnold's hodograph transformation \cite{ArnoldHodograph}]
\label{lma:hodograph}
Consider the unit cotangent bundle with the contact form induced by the flat metric $(U^*\R^n,\alpha_0)$. For any linear subspace $V^k \subset T_0\R^n=\R^n$, the isotropic $k-1$-sphere $V \cap U^*_0\R^n$ has a neighbourhood that is strict contactomorphic (i.e.~the contactomorphism preserves the contact forms and not merely the contact distributions) to a neighbourhood of $j^10 \times \{0\}$ inside
$$ (J^1S^{k-1} \times T^*\R^{n-k},dz-\theta_{S^{k-1}}+\theta_{\R^{n-k}}) $$
under which
\begin{itemize}
 \item the neighbourhood intersected with $U^*_0\R^n$ is mapped into the zero-section $j^10 \times 0_{\R^{n-k}}$; and
 \item the contact-involution on $U^*\R^n$ corresponds to $I_{J^1{S^k-1}} \times I_{T^*\R^{n-k}}$, where $I_{T^*\R^{n-k}}=I_0 \circ (-\id_{\R^{n-k}})^*$ (i.e.~multiplication by $-1$ in the base $\R^{n-k}$).
\end{itemize}
\end{lma}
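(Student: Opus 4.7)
The plan is to realise the required strict contactomorphism as a composition of three canonical pieces: Arnold's hodograph identification $U^*\R^n \cong J^1 S^{n-1}$, the cotangent lift of an explicit equivariant tubular neighbourhood of $S^{k-1}$ in $S^{n-1}$, and the standard identification $J^1(M \times N) \cong J^1 M \times T^*N$ obtained by a sign flip on the $T^*N$ factor. At each step the equivariance under the relevant anti-symplectic involution must be checked, and this bookkeeping will be the main obstacle.

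For the first step, work on $T^*\R^n \setminus 0_{\R^n}$ with $\theta = p \cdot dq$. Introduce polar coordinates $p = \lambda w$ with $w \in S^{n-1}$ and $\lambda > 0$, and decompose $q = zw + v$ with $z = w \cdot q$ and $v = q - zw \in w^\perp$. Since $w \cdot dw = 0$, a direct calculation yields $\theta = \lambda(dz - v \cdot dw)$, so that restriction to $U^*\R^n = \{\lambda = 1\}$ gives $\alpha_0 = dz - v \cdot dw$. Identifying $(w,v) \in T^*S^{n-1}$ via the round metric produces the strict contactomorphism
\[
\Phi_1 \colon (U^*\R^n,\alpha_0) \xrightarrow{\cong} (J^1 S^{n-1},\,dz - \theta_{S^{n-1}}),\quad (q,p) \mapsto (p,\,q - (q\cdot p)p,\,q \cdot p),
\]
under which the isotropic sphere $V \cap U^*_0\R^n$ maps to the zero section over $V \cap S^{n-1} = S^{k-1}$, and the contact involution $(q,p) \mapsto (q,-p)$ is intertwined with $(w,v,z) \mapsto (-w,v,-z)$, which is exactly $I_{J^1 S^{n-1}}$.

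For the second step, decompose $\R^n = V \oplus V^\perp$ and define the equivariant tubular neighbourhood
\[
\psi \colon S^{k-1} \times B^{n-k} \to S^{n-1},\quad \psi(u,y) = \bigl(\sqrt{1-|y|^2}\,u,\,y\bigr),
\]
which satisfies $\psi(-u,-y) = -\psi(u,y)$. Its canonical cotangent lift $J^1\psi$ is a strict contactomorphism; a direct computation using $D\psi(-u,-y) = D\psi(u,y)$ as linear maps under the obvious identification of tangent spaces at antipodal points shows that the involution on $J^1 S^{n-1}$ pulls back to $(u,v',y,p'',z) \mapsto (-u,v',-y,p'',-z)$ on $J^1(S^{k-1} \times B^{n-k})$. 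Finally, the sign flip $\sigma(y,p'') = (y,-p'')$ satisfies $\sigma^*\theta_{\R^{n-k}} = -\theta_{\R^{n-k}}$, yielding a strict contactomorphism from $(J^1(S^{k-1} \times B^{n-k}),\,dz - \theta_{S^{k-1}} - \theta_{\R^{n-k}})$ to $(J^1 S^{k-1} \times T^*B^{n-k},\,dz - \theta_{S^{k-1}} + \theta_{\R^{n-k}})$ that transports the previous involution to $((u,v',z),(y,p'')) \mapsto ((-u,v',-z),(-y,p''))$, which is precisely $I_{J^1 S^{k-1}} \times I_{T^*\R^{n-k}}$.

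The composition of $\Phi_1$, $(J^1\psi)^{-1}$, and the sign flip yields the desired equivariant strict contactomorphism. The image of $U^*_0\R^n$ intersected with the neighbourhood, characterised by $q = 0$, lands in $j^10 \times 0_{\R^{n-k}}$ because $q = 0$ forces $z = 0$ and $v = 0$ under $\Phi_1$, and both subsequent maps preserve the respective zero sections. Beyond the equivariance bookkeeping, every ingredient is an elementary canonical construction.
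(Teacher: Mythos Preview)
Your proof is correct and follows essentially the same three-step route as the paper: apply Arnold's hodograph identification $U^*\R^n \cong J^1S^{n-1}$, pass to a tubular neighbourhood $S^{k-1}\times \R^{n-k}$ of $V\cap S^{n-1}$ inside $S^{n-1}$ compatible with the antipodal map, and then use the canonical splitting $J^1(S^{k-1}\times \R^{n-k})\cong J^1S^{k-1}\times T^*\R^{n-k}$. The only difference is that you make each step explicit (the hodograph formula, the concrete tubular map $\psi(u,y)=(\sqrt{1-|y|^2}\,u,y)$, and the sign flip on the $T^*\R^{n-k}$ factor needed to match the stated contact form) and you verify the equivariance under the involutions by direct computation, whereas the paper leaves these as routine checks.
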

\begin{proof}
 Arnold's hodograph transformation takes care of the case $n=k$. The general case can be seen as follows. 
 
 The sphere $V^k \cap S^{n-1} \subset S^{n-1}$ has a neighbourhood $U \subset S^{n-1}$ that is diffeomorphic to the trivial bundle $S^{k-1} \times \R^{n-k} \to S^{k-1}$. Furthermore, under this identification, $V^k \cap S^{n-1}$ is identified with the zero-section, while the antipodal map of $S^{n-1}$ becomes the antipodal map on the base $S^{k-1}$ together with multiplication by $-1$ in the fibres $\R^{n-k}$ (i.e.~in the normal bundle of the sphere). 
 
 Finally, note that there is a canonical identification 
 $$(J^1U,dz-\theta_U) \cong (J^1 S^{k-1} \times T^*\R^{2(n-k)},dz-\theta_{S^{k-1}}+\theta_{\R^{n-k}})$$
 preserving the contact forms, and under which the involution takes the sought expression.
 \end{proof}

Next, we give the recipes for constructing the needed two-dimensional $I_0$-invariant symplectic cobordisms inside the symplectisation $\R \times U^*M \cong T^*M \setminus 0_M$ whenever $M$ is of dimension at least $\dim M \ge 2$. 

\subsubsection{Fillings of transverse knots by genus-$g$ surfaces}
\label{sec:genusgfilling}

Consider the standard Legendrian $(p,q)$ torus link $\Lambda_{(p,q)} \subset (\R^3,dz-y\,dx)$ inside the standard contact vector space. There is a Lagrangian cobordism inside $\R \times \R^3$ from $\emptyset$ to $\Lambda_{p,q}$ which is of genus $\frac{1}{2}(p-1)(q-1)$; see e.g.~\cite{Ekhoka}. Since every contact manifold is locally contactomorphic to the contact vector space, we can now apply Proposition \ref{prp:pushoff} combined with the method of Subsection \ref{sec:sympconstr} to construct symplectic cobordisms of any genus from $\emptyset$ to the unique null-homotopic transverse knot in the contactisation of any contact manifold of dimension at least five. (Recall Proposition \ref{prp:hprinciple} by which there is a unique null-homotopic transverse knot in connected high-dimensional contact manifolds.)

Since the aforementioned cobordism can be constructed inside the symplectisation of any arbitrarily small neighbourhood of a contact manifold, it is easy to construct an $I_0$-invariant such cobordism consisting of two disjoint genus-$g$ surfaces. 

\subsubsection{Connected $I_0$-invariant filling without fixed points}
\label{sec:fillingII}
Here we construct a connected $I_0$-invariant filling in the symplectisation of $J^1S^1$ on which $I_0$ has no fixed points. First, consider the two-copy link
$$j^1(1) \sqcup j^1(-1) \subset (J^1S^1,dz-\theta_{S^1})=(S^1 \times \R_p \times \R_z,dz-p\,d\theta),$$
which is invariant under the contact-involution $I_{J^1S^1}(\theta,p,z)=(\theta+\pi,-p,-z)$. This Legendrian has a connected Lagrangian $I_{J^1S^1}$-invariant filling in $\R_\tau \times J^1S^n$ given by
$$\left\{ (\gamma_1(t),j^1\gamma_2(t)) \in \R_\tau \times J^1S^n \right\}$$
for some smooth curve $(\gamma_1,\gamma_2)(t) \in \R^2$ which satisfies
\begin{itemize}
 \item $\gamma_1(t)=\gamma_1(-t)$;
 \item $\gamma_1''(t) \ge 0$; and 
 \item $\gamma_1(t)=|t|$ for $t \gg 0$,
\end{itemize}
while
\begin{itemize} 
 \item $\gamma_2(t)=-\gamma_2( - t)$;
 \item $\gamma_2'(t) \ge 0$; and
 \item $\gamma_2(\pm t)=\pm 1$ for $t \gg 0$.
\end{itemize}

 Note that $j^1 \gamma_2(t)=\{z=\gamma_2(t)\} \subset J^1S^1$ is a push-off of the zero-section $j^10$ by the time-$\gamma_2(t)$ Reeb flow. Hence, the pull-back of $e^\tau(dz-p\,d\theta)$ under the map $(\gamma_1,j^1\gamma_2)$ is clearly an exact one-form, which means that this map is an exact Lagrangian embedding.

Again, since $J^1S^1$ embeds into the standard contact Darboux ball, we can argue as in Subsection \ref{sec:genusgfilling} to produce a symplectic $I_0$-invariant cylinder in the symplectisation $\R \times U^*M$ whenever $\dim M \ge 2$.

\subsubsection{ Pairs of boundary connected sums via invariant pairs of one-handles}
\label{sec:connectedsumI}

 Here we describe how one can perform a Lagrangian boundary connected sum operation inside the symplectisation via a pair of Lagrangian one-handles that are exchanged under the involution $I_0$. Thus, both the one-handles and the connected sum operations are $I_0$-invariant. The construction is as follows: 

One can perform the so-called Legendrian ambient connected-sum operation on two disjoint Legendrians $\Lambda_0 \sqcup \Lambda_1 \subset (Y,\alpha)$ as described in work \cite{Dimitroglou:Ambient} by the second author. Furthermore, there is a Lagrangian standard one-handle attachment cobordism from the union $\Lambda_0 \sqcup \Lambda_1$ to the result $\Lambda_3$ of the connected sum. Concatenating the cobordism $\R\times (\Lambda_0 \sqcup \Lambda_1)$ with this Lagrangian one-handle amounts to performing a boundary connected sum. 

In the case when $\Lambda_0 \sqcup \Lambda_1 \cap I_Y(\Lambda_0 \sqcup \Lambda_1 ) =\emptyset,$ we can use a sufficiently thin Lagrangian handle-attachment cobordism to ensure that also the cobordism is disjoint from its own image under the anti-symplectic involution $I_0$. This is the sought $I_0$-invariant pair of Lagrangian one-handles that give rise the two simultaneously performed boundary connected sums.

Finally, arguing as above, one can readily turn this construction into a boundary-connected sum operation on symplectic surfaces in symplectisations.

\section{Conical surfaces in the complex projective space}

For constructing $I$-invariant symplectic submanifolds of $\CP^n$ it is useful to symplectically identify a neighbourhood of the fixed-point locus $\RP^n \subset \CP^n$ of $I$ with a neighbourhood of $0_{\RP^n} \subset T^*\RP^n$. There are many such identification, e.g.~by Weinstein's neighbourhood theorem. We will follow a path similar to the one by Adaloglou \cite{Adaloglou}, in order to obtain an identification of a neighbourhood as large as possible.

The standard projective plane $\RP^n \subset \CP^n$ is contained inside the complement
$$ \RP^n \subset \CP^n \setminus Q^{n-1} $$
of the standard smooth quadric
$$ Q^{n-1}=\{[Z_0:Z_1:\ldots:Z_n];\:Z_0^2+Z_1^2+\ldots+Z_n^2=0\}.$$
Note that the involution $I$ fixes $Q^{n-1}$ as a subset, and thus descends to a fixed-point free anti-symplectic involution on $Q^{n-1}$.

Note that $\RP^n$ has isometry group $PO(n+1)$ that admits a natural embedding $PO(n+1) \subset PU(n+1)$ as a subgroup that commutes with $I$. This subgroup acts linearly by $O(n+1)$ on the homogeneous coordinates $\C^{n+1} \setminus \{0\}$. The group $PO(n+1)$ of real isometries also fixes the quadric $Q^{n-1}$ set-wise. 

\subsection{The symplectic structure on the complement of a smooth quadric}

A folklore result states that the complement of a smooth quadric $Q^n \subset \CP^n$ is symplectomorphic to a fibre-wise convex subset of the cotangent bundle $T^*\RP^n$; see work by Adaloglou for a detailed proof \cite[Corollary 2.4]{Adaloglou}. This claim is very closely related to the even more well-known folklore result that the affine quadric is symplectomorphic to $T^*S^n$; indeed, taking the double cover of $\CP^n$ branched along the quadric $Q^{n-1}$ we get a projective quadric, with the complement of the branching locus equal to a subset of the double cover $T^*S^n \to T^*\RP^n$. Since we will need the identification to be compatible with the anti-symplectic involutions and $PO(n+1)$-actions, we reprove the existence of the symplectomorphism here. 

Through each point $p \in \RP^n$ we can consider the element in $PU(n+1)=Isom(\CP^n,\omega_{FS})$ which is uniquely determined by the property that it fixes the point $p$, while its differential is given by multiplication by $i$ in the tangent space $T_p\CP^n$. The image of $\RP^n$ under this linear map is thus a uniquely determined Lagrangian embedding $\RP^n_p \subset \CP^n$ of the projective plane that intersects the standard $\RP^n$ transversely in the point $p$.

The Lagrangian $\RP^n_0$ which passes through the origin $0 \in \RP^n$ in the standard affine chart is given by the imaginary projective plane $\RP^n_0=i\RP^n$. The latter can be seen to intersect $\RP^n$ in the projective plane $\RP^{n-1}_\infty \subset \RP^n$ at infinity with respect to the standard affine chart. Note that
$$PO(n+1)\subset PU(n+1) =Isom(\CP^n,\omega_{FS})$$
acts transitively on $\RP^n$, as well as on the set of Lagrangian subspaces $\RP^n_p$ for $p \in \RP^n$.

Note that $\RP^n_0 \cap Q^{n-1}$ is equal to the Lagrangian $(n-1)$-dimensional sphere
$$\{z_1^2+\ldots+z_n^2=
-1,\:\mathfrak{Re}\mathbf{z}=0\}\subset \{1+z_1^2+\ldots+z_n^2=0\}$$
inside the symplectic submanifold $Q^{n-1} \subset \CP^n$. Furthermore, any real line through $0 \in \RP^n_0$ is determined by its intersection in two antipodal points on the latter sphere.

\begin{prp}
The open Lagrangian $n$-discs $B^n_p \subset \RP^n_p$ whose boundary is given by the $(n-1)$-dimensional sphere
$$\partial \overline{B^n_p} = Q^{n-1} \cap \RP^n_p \subset \RP^n_p$$
are embedded and foliate $\CP^n \setminus Q^{n-1}$. In other words, $\CP^n \setminus Q^{n-1}$ is a tubular neighbourhood of $\RP^n$ in the form of a Lagrangian $n$-disc bundle on which $PO(n+1)$ acts.
\end{prp}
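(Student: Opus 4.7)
The plan is to use $PO(n+1)$-equivariance to reduce to a single explicit computation at the base point $p_0 = [1:0:\cdots:0]$, and then to read off the foliation in homogeneous coordinates. At $p_0$, the defining element $g_{p_0} \in PU(n+1)$ is $[Z_0 : Z_1 : \cdots : Z_n] \mapsto [Z_0 : iZ_1 : \cdots : iZ_n]$, so $\RP^n_{p_0} = g_{p_0}(\RP^n)$ is parametrised in the standard affine chart by $(t_1,\ldots,t_n) \in \R^n \mapsto [1 : it_1 : \cdots : it_n]$. Intersecting with the quadric $Q^{n-1}$ yields the equation $1 - |t|^2 = 0$, so $B^n_{p_0}$ is manifestly the embedded open unit ball $\{|t|<1\}$ in this chart and its boundary sphere lies on $Q^{n-1}$.

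Next I would observe that $g_{h(p)} = h \circ g_p \circ h^{-1}$ for any $h \in PO(n+1)$, which is an immediate consequence of the uniqueness characterisation of $g_p$. Since $PO(n+1)$ acts transitively on $\RP^n$ and preserves $Q^{n-1}$, all discs $B^n_p$ are $PO(n+1)$-translates of $B^n_{p_0}$ and are therefore embedded Lagrangian. Choosing a unit vector $X \in \R^{n+1}$ representing $p = [X]$, we obtain the explicit formula
$$ B^n_{[X]} \;=\; \left\{[X + iY] \in \CP^n : Y \in X^\perp \subset \R^{n+1},\; |Y| < 1\right\}. $$
The total space $\mathcal{B} = \{(p,q) : q \in B^n_p\}$ is then a smooth $PO(n+1)$-equivariant Lagrangian $n$-disc bundle over $\RP^n$ of the same real dimension $2n$ as $\CP^n \setminus Q^{n-1}$, and the goal reduces to showing that the evaluation map $\ev \colon \mathcal{B} \to \CP^n \setminus Q^{n-1}$ is a diffeomorphism.

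The main step, and the only real obstacle, is the foliation property, which I would verify by producing a canonical normal form for homogeneous representatives. Given $q \in \CP^n \setminus Q^{n-1}$ with lift $Z = X + iY \in \C^{n+1} \setminus \{0\}$, compute $Z^T Z = |X|^2 - |Y|^2 + 2i\, X \cdot Y$; rescaling $Z$ by $e^{i\theta}$ scales $Z^T Z$ by $e^{2i\theta}$, so since $Z^T Z \neq 0$ one can smoothly and uniquely (modulo $Z \mapsto -Z$) rotate the phase to achieve $X \cdot Y = 0$ together with $|X| > |Y|$, and then rescale by a positive real to arrange $|X| = 1$. This places $q$ in $B^n_{[X]}$, proving surjectivity, and the construction gives an explicit smooth inverse to $\ev$. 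For injectivity, if $q = [X_1 + iY_1] = [X_2 + iY_2]$ are both in this normal form, writing $X_2 + iY_2 = (a+ib)(X_1 + iY_1)$ and imposing $X_j \cdot Y_j = 0$ forces $ab(1 - |Y_1|^2) = 0$; since $|Y_j| < 1$ and $|X_j| = 1$, a short direct check rules out $a = 0$ (it would force $|Y_2| > 1$), giving $b = 0$ and $a = \pm 1$, hence $[X_2] = [X_1]$ as required. The $I$-invariance of each $B^n_p$, which follows from $I \circ g_p = g_p \circ I$, is preserved throughout.
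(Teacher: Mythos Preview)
Your proof is correct and complete. The explicit normal form $[X+iY]$ with $|X|=1$, $X\perp Y$, $|Y|<1$, obtained by rotating the phase of $Z^TZ$ to make it real positive, neatly packages both surjectivity and injectivity of the evaluation map and yields a smooth inverse.

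The paper takes a genuinely different route. For disjointness of $B^n_p$ and $B^n_q$ with $p\neq q$, rather than computing a canonical representative, the paper foliates each disc by the pencil of real lines through its centre and argues via complex projective geometry: any intersection point of $B^n_p$ and $B^n_q$ forces two of these real lines to share a point, hence their complexified lines coincide (a complex line being determined by two of its points), and then the two intersection points of that common complex line with $Q^{n-1}$ pin down both real arcs, forcing $p=q$. For surjectivity, the paper appeals only to elementary topology: $\CP^n\setminus Q^{n-1}$ is connected, the disc bundle is open, and its topological boundary lies on $Q^{n-1}$, so the image must be everything. Your argument is more computational but has the advantage of producing an explicit smooth inverse, which dovetails nicely with the $PO(n+1)$-equivariant identification with $T^*\RP^n$ proved immediately afterwards; the paper's argument is coordinate-free and highlights the role of the quadric as the locus where the complexified real lines become tangent, but does not directly furnish the inverse map.
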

\begin{proof}
\emph{The fact that the discs form an embedded tubular neighbourhood:} Two discs $B^n_p$, $B^n_q$ for $p \neq q$ are disjoint for the following reason. Both discs are foliated by real lines in $\RP^n_p$ and $\RP^n_q$ that pass through $p$ and $q$, respectively, whose complexifications moreover have intersections with $\RP^n$ that again are real lines. The two real lines $\ell_p \subset \RP^n_p$ and $\ell_q \subset \RP^n_q$ passing through $p$ and $q$, respectively, can be seen to have an intersection in $B^n_p \cap B^n_q$ if and only if their complexified lines coincide. Indeed, if the corresponding real lines $\C\ell_p \cap \RP^n \subset \RP^n$ and $\C\ell_q \cap \RP^n \subset \RP^n$ intersect at an imaginary point, then these lines must coincide.

Thus, if $B^n_p$ and $B^n_q$ intersect for $p \neq q$, we are in a situation where the complexifications of $\ell_p$ and $\ell_q$ coincide, and intersect $\RP^n$ in the unique real line that passes through both $p$ and $q$. When this holds one can verify, e.g.~by consulting the case $n=1$, that $\ell_p$ and $\ell_q$ for two different points $q \neq p$ that live on a common line in $\RP^n$ intersect precisely at the two points in the generic intersection $\ell_p \cap Q^{n-1}=\ell_q \cap Q^{n-1}$. This contradiction implies that the two balls $B^n_p$ and $B^n_q$ must be disjoint, as sought. 

\emph{The fact that the discs make up all of $\CP^n \setminus Q^{n-1}$:} This follows from elementary topology. It suffices to note that $\CP^n \setminus Q^{n-1}$ is open and connected. The disc bundle constructed is also open, and its topological boundary is contained inside $Q^{n-1}$. 
\end{proof}

We will now compare the symplectic structure on $\CP^n \setminus Q^{n-1}$ with the symplectic structure on the cotangent bundle $T^*\RP^n$. The Fubini--Study form can equivalently be written as
$$ \omega_{FS}=\frac{i}{2}\del\delbar\rho, \:\: \rho(\mathbf{z})=\log{\left(\frac{1+\|\mathbf{z}\|^2}{|1+z_1^2+\ldots +z_n^2|}\right)},$$
in the complement of $Q^{n-1}$ in the standard affine coordinates. The function $\rho(\mathbf{z})$ extends to a well-defined potential on all of $\CP^n \setminus Q^{n-1}$. (It is induced by a trivialisation of the line-bundle $\mathcal{O}(2) \to \CP^n$ defined in the complement of $Q^{n-1}$.) The fact that $\rho$ can be extended to a potential away from $Q^{n-1}$ can also be checked explicitly, e.g.~by observing that it is has the same expression in all affine charts.

Furthermore, this potential is preserved by the action of $PO(n+1)$. This can be seen by considering the homogeneous function
$$ \log {\left(\frac{\|\mathbf{Z}\|^2}{|Z_0^2+Z_1^2+\ldots +Z_n^2|}\right)}$$
of degree zero which is invariant under $O(n+1)$ and well-defined away from $\{Z^2_0+Z^2_1+\ldots+Z^2_n=0\}$.
In this manner, we get a primitive $-\frac{1}{4}d^c\rho \in \Omega^1(\CP^n\setminus Q^{n-1})$ of the Fubini--Study form defined on $\CP^n \setminus Q^{n-1}$.
\begin{thm}
\label{thm:identification}
There exists a $PO(n+1)$-equivariant embedding
$$\Phi \colon \left(\CP^n \setminus Q^{n-1},-\frac{1}{4}d^c\rho\right)\hookrightarrow \left(T^*\RP^n,\theta_{\RP^n}\right)$$
that preserves the primitives of the symplectic forms, which is uniquely determined by the properties:
\begin{enumerate}
\item the restriction to $\RP^n \subset \CP^n$ is the canonical identification with the zero-section $0_{\RP^n} \subset T^*\RP^n$;
\item each $B^n_p$ is mapped into the corresponding cotangent fibre $T^*_p\RP^n$;
\item the anti-symplectic involution $I$ in the domain is identified with $I_0$ in the cotangent bundle, i.e.~the fibre-wise scalar multiplication by $-1$ in the target; and
\item when restricted to any $B^n_p$, the map preserves lines through the origin (as defined by the conformal structure in the domain and the linear bundle structure in the target);
\item when restricted to any $B^n_p$, the map sends concentric spheres in $B^n_p$ around the origin (defined by the Fubini-Study metric on $\CP^n$), to a spherical cotangent fibres of constant radius in $T^*_p\RP^n$ (induced by the Fubini--Study metric on $\RP^n$).
\end{enumerate}
\end{thm}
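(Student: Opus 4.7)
The plan is to construct $\Phi$ by matching the Liouville vector field $V$ on $\CP^n\setminus Q^{n-1}$, defined by $\iota_V\omega_{FS}=-\frac{1}{4}d^c\rho$, with the standard Euler vector field $\sum p_i\partial_{p_i}$ on $T^*\RP^n$, whose Liouville form is $\theta_{\RP^n}$. The starting observation is that $\rho\equiv 0$ along $\RP^n$: in any standard affine chart, the substitution of a real $\mathbf{z}=\mathbf{x}$ makes the numerator and denominator of the argument of the logarithm coincide. Hence $V$ vanishes along $\RP^n$, which plays the role of the isotropic skeleton of the Liouville manifold $(\CP^n\setminus Q^{n-1},-\frac{1}{4}d^c\rho)$.

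I would next show that $V$ is tangent to each Lagrangian disc $B^n_p$ and radial there. By $PO(n+1)$-transitivity it suffices to verify this on $B^n_0=i\RP^n$. A direct real-variation computation of $\rho(i\mathbf{x}+t\mathbf{v})$ at $t=0$ shows that both $\|\mathbf{z}\|^2$ and $|1+\sum z_k^2|$ have vanishing first variation in real directions at an imaginary point, giving $d\rho(\mathbf{v})=0$, equivalently $d^c\rho|_{TB^n_0}=0$. Since $B^n_0$ is Lagrangian, $\iota_V\omega_{FS}|_{TB^n_0}=0$ forces $V\in TB^n_0$. Moreover $\rho|_{B^n_0}=\log\frac{1+\|\mathbf{x}\|^2}{1-\|\mathbf{x}\|^2}$ depends only on $\|\mathbf{x}\|$, so $V$ is radial on $B^n_0$ and, by equivariance, radial on every $B^n_p$. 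In particular, the negative Liouville flow retracts $\CP^n\setminus Q^{n-1}$ onto $\RP^n$ along the foliation $\{B^n_p\}$.

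I would then define $\Phi$ as follows. Having linearised $V$ at a point $p\in\RP^n$, the K\"{a}hler identification $JT_p\RP^n\cong T^*_p\RP^n$ provided by the Fubini--Study metric is a $PO(n)$- and $I$-equivariant isomorphism matching $T_pB^n_p$ with the cotangent fibre; set this linear map to be $d\Phi_p$. Off $\RP^n$, extend $\Phi$ by the rule: if $x$ lies on the Liouville trajectory from $p$ at flow time $t$, let $\Phi(x)$ be the Euler trajectory from the corresponding initial direction $d\Phi_p(x_0)\in T^*_p\RP^n$ at time $t$. Equivariance under $PO(n+1)$ makes this simultaneously well-defined over all $p\in\RP^n$, and properties (1), (2), (4), (5) together with compatibility with $I$ and $I_0$ are built in. The primitive-preservation $\Phi^*\theta_{\RP^n}=-\frac{1}{4}d^c\rho$ is automatic from the Liouville-flow correspondence, since matching Liouville vector fields and agreeing on the respective zero-sections forces the two primitives of the same symplectic form to coincide.

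The main technical obstacle I expect is the smooth extension of $\Phi$ across the zero-section $\RP^n$, where the Liouville flow parameter diverges. To handle this I would analyse the linearisation of $V$ transversely to $\RP^n$: the normal Hessian of $\rho$ at $p$ is positive definite on $JT_p\RP^n$, as can be read off from $\omega_{FS}\approx\sum dx_k\wedge dy_k$ at the origin of an affine chart, and the stable manifold theorem applied to the gradient-like flow of $V$ yields a smooth tubular neighbourhood structure on $\CP^n\setminus Q^{n-1}$ modeled on the normal bundle $JT\RP^n\to\RP^n$. Combined with the linear identification above, this furnishes the sought smooth extension. Uniqueness subject to (1)--(5) and primitive-preservation follows because (2), (4) and (5) reduce the ambiguity on each disc $B^n_p$ to a single radial scaling function, which is then pinned down by the matching of Liouville flows together with the linear normalisation imposed by smoothness across $\RP^n$.
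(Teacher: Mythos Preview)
Your Liouville-flow approach is a genuinely different route from the paper's, and it is essentially correct, but two steps are under-argued. The paper instead constructs $\Phi$ by hand: it prescribes $\Phi$ on a single ray in $B^n_0$, extends by the $O(n)\subset PO(n+1)$ action to all of $B^n_0$, and then by $PO(n+1)$-transitivity to every fibre; the primitive equality $\Phi^*\theta_{\RP^n}=-\tfrac{1}{4}d^c\rho$ is checked by the explicit computation $-\tfrac{1}{4}d^c\rho|_{\RP^n_0}=\sum_i\frac{-y_i}{1-\|\mathbf{y}\|^4}\,dx_i$, which visibly matches the tautological form under a radial map. This sidesteps any smoothness issue at the skeleton. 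The paper's uniqueness argument is also sharper than yours: it uses only (1), (2), and the symplectic condition, observing that conjugating a fibre-preserving symplectomorphism fixing $0_{\RP^n}$ by the fibre rescalings $\sigma_t$ gives a Hamiltonian isotopy that fixes both the zero-section and each fibre, hence is generated by a constant, so the map is the identity.

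The two loose ends in your argument are these. First, the stable manifold theorem yields smoothness of the unstable foliation $\{B^n_p\}$, not a smooth conjugacy of $V$ with its linearisation; that is a Sternberg-type statement. Here it is elementary once you use equivariance to reduce to a single radial ODE $V|_{B^n_p}=f(r)\partial_r$ with $f(0)=0$, $f'(0)=1$: the conjugacy $h$ solving $h'f=h$ satisfies $h(r)=r\,e^{G(r)}$ with $G$ smooth, hence extends smoothly through $r=0$, and $PO(n+1)$-equivariance then gives global smoothness in $p$. Second, your claim that primitive-preservation is ``automatic'' presupposes that $\Phi$ is symplectic (you speak of ``two primitives of the \emph{same} symplectic form''), which has not yet been shown. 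The missing step is: $d\Phi_p$ is symplectic by construction of the K\"ahler identification $JT_p\RP^n\cong T^*_p\RP^n$, and both $\omega_{FS}$ and $\Phi^*d\theta_{\RP^n}$ satisfy $\mathcal{L}_V(\cdot)=(\cdot)$, so agreement along $\RP^n$ propagates along the $V$-flow to give $\Phi^*d\theta_{\RP^n}=\omega_{FS}$ everywhere; only then does $\Phi^*\theta_{\RP^n}=\iota_V\omega_{FS}=-\tfrac{1}{4}d^c\rho$ follow.
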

\begin{proof}
For the uniqueness, we note that any symplectomorphism $\Phi$ of $T^*M$ that is the identity along the zero-section and preserves fibres must be the identity. Indeed, conjugating the symplectomorphism with a fibre-wise rescaling
$$\sigma_t^{-1} \circ \Phi \circ \sigma_t, \:\sigma_t(x)=t\cdot x,$$
yields a Hamiltonian isotopy of $T^*M$ that fixes both the zero-section pointwise, and the Lagrangian fibres set-wise. Since such a Hamiltonian must be constant both along the zero-section as well as along the fibres, it follows that the generating Hamiltonian is globally constant. Consequently, $\Phi$ commutes with $\sigma_t$ for all $t >0$. Furthermore, since $D\Phi$ must be the identity along $0_M$, it follows that $\Phi=\id_{T^*M}$ holds everywhere, as sought.

In order to define the $P(O+1)$-equivariant map it suffices to prescribe its restriction to a map
$$ \Phi_0 \colon B^{n}_0 \hookrightarrow T^*_0\RP^n.$$
First note that the $O(n) \subset P(O+1)$ equivariance implies that this map is determined once we know it on the ray 
$$\{y_i=0;\: i=2,\ldots,n\} \cap B^{n}_0 \subset \Im\C^n \subset \C^n =\CP^n \setminus \CP^{n-1}_\infty.$$
We prescribe this ray to be sent into the one-dimensional subspace consisting of this cotangent vectors that annihilate the tangent space to the embedding $\{x_1=0\} \subset \RP^{n-1} \subset \RP^n$ at $0 \in \RP^n$.

Defining the map $\Phi_0$ with some more care along the line, we can ensure that the primitive $-\frac{1}{4}d^c \rho$ of $\omega_{FS}$ is the pull-back of $\theta_{\RP^n}$ under the restricted map
$$ \CP^1 \setminus Q^0 \hookrightarrow T^*\RP^1 \subset (\T^*\RP^n,\theta_{\RP^n}),$$
where $\CP^1 \setminus Q^0$ is the complexified real line $\C\{y_i=0,\: i=2,\ldots,n\} \setminus Q^{n-1}$ with the quadric removed. Here the latter complexified line is endowed with the restriction of the Fubini--Study form (which again simply is the Fubini--Study form in dimension one).

In fact, since the primitive of the symplectic form is given by
$$ -\frac{1}{4}d^c \rho |_{\RP^n_0}=\frac{1}{2}\sum_i\left( \frac{-1}{1+\|\mathbf{z}\|^2} +\frac{-1}{|1+z_1^2+\ldots+z_n^2|}\right)y_i\,dx_i=\sum_i \frac{-y_i}{ |1-\|\mathbf{y}\|^4|} dx_i,$$
 which when further restricted to $B^n_0=\{\|\mathbf{y}\|^2<1\}\subset \RP^n_0$ becomes
$$-\frac{1}{4}d^c \rho |_{B^n_0} = \sum_i \frac{-y_i}{1-\|\mathbf{y}\|^4} dx_i.$$
It now follows by construction that $-\frac{1}{4}d^c \rho$ is the pull-back of $\theta_{\RP^n}$ on all of $T\CP^n$ along $\CP^1 \setminus Q^0 $ (not just on the tangent space of the latter). It thus follows that $\Phi$ is symplectic everywhere.

(1), (2), (3), (4): These points now follow immediately by construction.

(5): follows by (4) and the $PO(n+1)$-equivariance. Note that the action of $O(n) \subset PO(n+1)$ acts on $B^n_0$ while preserving the concentric spheres around the origin.
\end{proof}

\subsection{Definition of conical submanifolds}

By using the above symplectic identification of $\CP^n \setminus Q^{n-1}$ with a fibre-wise convex subset of $T^*\RP^n$ we can make the following definition.

\begin{dfn}
An immersion in $\CP^n$ is said to be \emph{conical near $\RP^n$} if it corresponds to an immersion in $T^*\RP^n$ which is conical near $0_{\RP^n}$ under the symplectic embedding $ \CP^n \setminus Q^{n-1}\hookrightarrow T^*\RP^n$ from Theorem \ref{thm:identification}.
\end{dfn}

Note that immersions that are conical near $\RP^n$ in particular are $I$-invariant in some neighbourhood of $\RP^n$, and determined by an $I$-invariant transverse link in $U^*\RP^3$ over which the surface is a cone.

\section{Proof of Theorem \ref{thm:main}}
We start by recalling the constructions of the standard $I$-invariant algebraic curves $\Sigma_i$ of degrees $i=1$ and $i=2$. We will use parts of these for constructing our general $I$-invariant symplectic surfaces.

The standard $\RP^1 \subset \RP^3$ as well as the standard null-homologous unknot
$$\{x_1^2+x_2^2=1, x_3=0\} \subset \RP^3$$
are both real-algebraic. The complex locus $\CP^1$ of $\RP^1$ is conical on the nose when considered inside $\CP^3\setminus Q^2 \hookrightarrow T^*\RP^3$. Denote this surface by $\Sigma_1=\CP^1$ and note that it is of degree one.

It is no longer true that the algebraic surface $\Sigma_2$ with real locus the unknot $\{x_1^2+x_2^2=1, x_3=0\}$ is conical in a neighbourhood of $\RP^3$; however, we can use Lemma \ref{lma:cylindrisation} to Hamiltonian isotope the complex part of this algebraic knot while fixing the real locus, in order to make it conical near $\RP^3$. Denote the obtained conical $I$-invariant symplectic surface by $\Sigma_2$, which is of degree two.

Since $\Sigma_i$ are conical near $\RP^3$, their intersections with some small tubular neighbourhood $U \subset \CP^3$ of $\RP^3 \subset \CP^3$ are surfaces that are conical in $T^*\RP^3$ under the identification in Theorem \ref{thm:identification}. Moreover, we can assume that the latter identification sends $U$ to a radius-$r$ co-disc bundle $D^*_rT^*\RP^3$ for some small radius $r>0$.

In particular, $(\Sigma_i \cap U) \setminus \RP^3$ corresponds under Theorem \ref{thm:identification} to a cone $(-\infty,a) \times T_i$ over a transverse $I$-invariant link $T_i \subset U^*\RP^3$ consisting of two components that are interchanged by $I$. In the case $i=1$ each of the two components lives in the non-trivial homotopy class in $\pi_1(U^*\RP^3)\cong\pi_1(\RP^3)\cong \Z_2,$ while in the case $i=2$ each component lives in the trivial homotopy class.

Moreover, we will write
$$ \Sigma_i^{\OP{hat}} \coloneqq \Sigma_i \setminus U \subset \CP^3$$
for the complement of the conical sub-surface $U\cap \Sigma_i \supset (\Sigma_i)_\R$. Note that $\Sigma_i^{\OP{hat}}$ is symplectic, embedded, $I$-invariant, with boundary given by the transverse link $T_i$.

Now consider an arbitrary smooth link $L \subset \RP^3$. First apply the construction in Part (2) of Example \ref{ex:cotangent} to produce a piece of an $I$-invariant symplectic surface near $\RP^3$ that is conical near $\RP^3$. Denote this surface by $\Sigma^0_L$, which is conical near $\RP^3$ and intersects $\RP^3$ in the link $L$. Note that the boundary of $\Sigma^0_L$ can be identified with a transverse link in $U^*\RP^3$ that consists of an even number of components that are interchanged under $I$. Choosing an orientation of $L$ trivialises this action by singling out representatives $T^{0,+}_L$ for each equivalence class, so that $\partial \Sigma^0_L=T^0_L=T^{0,+}_L \sqcup I(T^{0,+}_L)$.

Then we concatenate (see Definition \ref{dfn:concatenation}) the surface $\Sigma^0_L$ with suitable $I$-invariant symplectic cobordisms coming from the connected-sum construction as described in Subsection \ref{sec:connectedsumI}. We can e.g.~make the choice of connecting all components of $T^{0,+}$, and then extending this connected symplectic cobordism to one which is $I$-invariant and consisting of two components. After suitable choices of boundary connected sums, we obtain an $I$-invariant symplectic surface $\Sigma^1_L$ with boundary consisting of two transverse links that are interchanged by $I$, has $L$ as its fixed-point locus, and which is of genus $\OP{rk} H_0(L)-1$.

The classification of $I$-invariant transverse knots from Proposition \ref{prp:hprinciple} implies that the boundary of $\Sigma^1_L$ is transverse isotopic inside $U^*\RP^3$ to either $T_1$ or $T_2$ through $I$-invariant transverse knots, depending on its homotopy class. First adjoining the symplectic $I$-invariant trace cobordism produced by Lemma \ref{lma:tracecob} and then adjoining the $I$-invariant ``hat'' produced above $\Sigma^{hat}_i$ of a suitable degree $i \in \{1,2\}$, yields a symplectic embedding of a closed connected surface of genus $\OP{rk} H_0(L)-1$ that is $I$-invariant and intersects $\RP^3$ in $L$. This finishes the construction of a connected type-I surface of genus $\OP{rk} H_0(L)-1$.

In order to construct a surface of type-I of higher genus we start by proceeding as above, constructing the $I$-invariant surface $\Sigma^1_L$ with two transverse boundary components, and which is of genus $\OP{rk} H_0(L)-1$. We then use the connected symplectic genus-$k$ filling from Subsection \ref{sec:genusgfilling} and double it with the involution $I$, after which we perform pairs of boundary connected sums as in Subsection \ref{sec:connectedsumI} to make the resulting surface connected and invariant under $I$. Note that the homology class of the boundary of the surface is not altered by these operations, but that the process increased the genus of the surface $\Sigma^1_L$ by $2k$. To finish, we add the $I$-invariant ``hat'' i.e.~ $\Sigma_i^{\OP{hat}}$ of a suitable degree $i \in \{1,2\}$ in order to close up the surface. 

Finally, for the construction of surfaces of type-II, the construction is similar to the case treated in the previous paragraph. In this case, however, instead of adding a pair of genus $k$-surfaces, we add $k$ disjoint copies of the connected $I$-invariant cylinder from Subsection \ref{sec:fillingII} (pushed-off to a symplectic surface), and perform $k$ pairs of boundary connected sums with these cylinders as described in Subsection \ref{sec:connectedsumI}. Note that this process increases the genus of $\Sigma^1_L$ by $k$. 
\qed

\section{Natural questions}

In future work we plan to study the following questions. First, it is very natural to consider the isotopy problem for the symplectic flexible links.
\begin{quest}
Are two flexible symplectic links with the same topological invariants Hamiltonian isotopic through flexible symplectic links?
\end{quest}
We expect that an $I$-equivariant version of the $h$-principle for high-codimension symplectic embeddings can be used to give a positive answer to the this question; see \cite{Eliashberg:IntroductionH} for the relevant $h$-principle. The smooth but non-symplectic case was treated by the first author in \cite{Johan:Flexible}.

Second, it would be useful to find a concept that is less flexible than symplectic $I$-invariant surfaces, but more flexible than real-algebraic curves. In the light of the constructions that were used in the present article, we make a tentative proposal of a definition.

Call an $I$-invariant piecewise-smooth embedded closed surface $\Sigma \subset \CP^3$ \textbf{$I$-invariant symplectic-conical surface with a holomorphic hat} if there exists an $I$-invariant neighbourhood $U \subset \CP^3$ of $\RP^3$ with contact-type boundary such that $\Sigma$ is holomorphic in $\CP^3\setminus U$, while it is smooth, symplectic, and conical inside $U$, and moreover has complex tangent planes along its intersection with $\RP^3$.
\begin{quest}
What are the restrictions on the link type in $\RP^3$ obtained as the real part of an $I$-invariant symplectic-conical surface with a holomorphic hat, when the surface is of degree at most two?
\end{quest}
The above question can be reformulated as a question about the existence of holomorphic hats inside $\CP^3 \setminus \Phi^{-1}(D^*_{<r}\RP^3)$, where $\Phi$ is the symplectic embedding from Theorem \ref{thm:identification}, where the holomorphic hat has boundary equal to a certain transverse link inside the boundary of $\Phi^{-1}(U^*_r \RP^3)$ of a tubular neighbourhood of $\RP^3$ symplectomorphic to a radius-$r$ unit sphere bundle. Symplectic hats inside $\CP^2$ were studied in work by Etnyre--Golla \cite{Hats}

Also note that the image of a symplectic-conical surface with a holomorphic hat under a real-algebraic knot projections to $\CP^2$ is an $I$-invariant immersions of the same type. The knot projection does not preserve the symplectic property of a submanifold in general; however, if we assume that the surface in $\CP^3$ is conical with complex tangents near $\RP^3$, its projection is an immersed symplectic-conical surface with a holomorphic hat. Indeed, the image of a (real) line under the knot projection is again a (real) line. Thus, when a surface which is conical near $\RP^3$ moreover has complex tangent planes along $\RP^3$, the cones of the aforementioned types are tangent to a complexified real lines along each of its fibres.

\bibliographystyle{unsrt}
\bibliography{references}
\end{document}